\theoremstyle{plain} 
\newtheorem{prop}{Proposition}[section]
\newtheorem{lemma}[prop]{Lemma}
\newtheorem{theorem}[prop]{Theorem}
\newtheorem{conjecture}[prop]{Conjecture}
\newtheorem{remark}[prop]{Remark}
\newtheorem{definition}[prop]{Definition}
\newtheorem{corollary}[prop]{Corollary}
\newcommand{\C}{\mathbb{C}}
\newcommand{\Z}{\mathbb{Z}}
\newcommand{\R}{\mathbb{R}}
\newcommand{\D}{\mathcal{D}}
\newcommand{\M}{\mathcal{M}}
\renewcommand{\S}{\mathcal{S}}
\newcommand{\gr}{\mathrm{gr}}
\newcommand{\CC}{\operatorname{CC}}
\newcommand{\ol}[1]{\overline{#1}}
\newcommand{\fk}{\textmd{k}}
\newcommand{\G}{\textmd{G}}
\newcommand{\fgv}{\mathfrak{g}^{\vee}}
\newcommand{\ftv}{\mathfrak{t}^{\vee}}
\newcommand{\Tv}{\textmd{T}^{\vee}}
\newcommand{\fts}{\S_{Ft}}
\newcommand{\lra}{\longrightarrow}
\begin{document}
\title[]{SOME UNIPOTENT ARTHUR PACKETS  FOR P-ADIC  SPLIT  $F4$}
\author{Leticia Barchini}
\address{Department of Mathematics, Oklahoma State University, Stillwater, OK 74078}
\email{leticia@math.okstate.edu}

\author{Andr\'as C. L\H{o}rincz}
\address{Department of Mathematics, University of Oklahoma, Norman, OK 73019}
\email{lorincz@ou.edu}

\maketitle

{\centering\footnotesize \em  Dedicated to Wilfried Schmid on the occasion of his 80th birthday.\par}

\begin{abstract}
Let $\G(\textmd{k})$ be the split form of the simple exceptional $p$-adic group of type $F_4,$  and let
$\mathcal O = F_4(a_3)$ be the minimal distinguished nilpotent orbit. Our main result concerns the class of unipotent
representations with cuspidal support at infinitesimal character $\Lambda$ determined by $\mathcal O.$
These representations  are parameterized by local systems, $\{(\S, \mathcal L)\}.$ 
We compute the characteristic cycles of the perverse sheaves $\text{IC}(\S, \mathcal L)$ 
and determine all micro-packets in the sense of \cite{Vo93}. 
In \cite{cmbo}, the authors  introduced a notion of weak Arthur packets in the p-adic setting. They conjectured that
 weak Arthur packets are unions of Arthur packets, in an appropriate sense.
We verify that weak Arthur packets are unions of micro-packets.
\end{abstract}

\section{Introduction}

Let $\textmd{k}$ be a nonarchimedean local field of characteristic $0$ with residue field $\textmd{F}_q$
of cardinality $q.$ Let  $\textmd{G}$ be a  reductive algebraic group defined over $\textmd{k}.$   
Let
$\textmd{G}(\fk)$ stand for the group of $\fk$-rational points. We assume throughout that the inner class of $\textmd{G}(\fk)$ includes
the split form.
Our work relates to $\Pi^{\text{Lus}}(\textmd{G}(\fk));$ the equivalence classes of irreducible 
$\textmd{G}(\fk)$-representations with unipotent cuspidal support defined in \cite{Lus95}.
\vskip .1in

Let $\G^{\vee}$ be the complex Langlands' dual group associated to $\G$,
and let  $\Tv \subset \G^{\vee}$ be a maximal torus. Write $\Tv = \Tv_c \; \Tv_{r},$ the polar decomposition of $\Tv$ where
$\Tv_c$ is the maximal compact subgroup of $\Tv$.
We write $\fgv, \ftv, \ftv_{r}$ for the Lie algebras of
$\G^{\vee}, \Tv,  \Tv_{r}.$ For a nilpotent orbit $\mathcal O \subset \fgv$ we choose a
$\mathfrak{sl}_2$-triple $\{{\it e}, {\it h}, {\it f}\}$ with ${\it h} \in \ftv_r.$
The element $q^{\frac{1}{2}\it{h}} \in \G^{\vee}$ is semisimple. We consider
\[\Pi_{q^{\frac{1}{2}\it{h}}}^{\text{Lus}}(\textmd{G}(\fk))\]
the equivalence classes of  irreducible  unipotent representations with cuspidal support  with infinitesimal character determined by 
 $q^{\frac{1}{2}\it{h}}.$ 
 \vskip .2in

 In analogy  with the case of  reductive groups defined over $\R$, 
in \cite{cmbo}, the authors introduced the  notion of  the  p-adic  {\it weak Arthur packet} with infinitesimal character $q^{\frac{1}{2}\it{h}}.$
They identified a {\it basic Arthur packet} contained in the  {\it weak Arthur packet}.  They
conjectured that 
 {\it weak Arthur packets}  are  union of Arthur packets, in an appropriate sense. See Conjecture \ref{cj2}.
 
 \vskip .2in
 
 In this paper, heavily relying on the work by Dan Ciubotaru in \cite{ci08} and \cite{ci22}, we verify Conjecture \ref{cj2}
when $\G^{\vee} = F_4$ and $\mathcal O$ is the minimal distinguished nilpotent orbit of type $F_4$, $F_4(a_3)$. We use the micro-local approach of
\cite{Vo93}. We show  that the {\it basic Arthur packet} $\Pi_{F_4(a_3)}^{\text{Art}}(\G(\fk))$ is a micro-packet and we exhibit the weak Arthur packet $\Pi^{\text{weak}}_{F_4(a_3)}(\G(\fk))$
as a union of micro-packets. 
\vskip .2in

Weak Arthur packets  admit an explicit description  in
terms of $AZ$, the normalized Aubert-Zelevinky involution \cite[Definition 1.5, Corollary 3.9]{Au95}.
 It is known that  $AZ$  preserves $ \Pi_{q^{\frac{1}{2}\it{h}}}^{\text{Lus}}(\textmd{G}(\fk)).$
  The  irreducible representations in $\Pi_{q^{\frac{1}{2}\it{h}}}^{\text{Lus}}(\textmd{G}(\fk))$ are parameterized by geometric data, that we call {\it geometric Langlands' parameters.} It is expected that the geometric Langlands' parameters
of $X$ and 
$AZ(X)$ are related  via the Fourier transform, see Conjecture \ref{cj1}.
In section \ref{last},  assuming that Conjecture \ref{cj1} holds true,  we show, in general, that 
weak Arthur packets are a union of micro-packets.

\vskip .2in

Here is an outline of the paper. 
 Subsections 2.1 to 2.4  contain brief descriptions of background material needed to state Conjecture \ref{cj2}.
 Subsection 2.5 concerns Conjecture \ref{cj2}. Subsection 2.6 describes the techniques used to compute micro-packets.
 A detailed description of $\Pi^{\text{weak}}_{F_4(a_3)}(\G(\fk))$ is presented in Section 3. Technical aspects of the
 computations are included in the Appendix.
  Section 4  relates Conjecture \ref{cj1} and Conjecture \ref{cj2}.

\vskip .2in
\noindent{\bf Acknowledgements.} 
We thank Dan Ciubotaru for his detailed and  helpful answers to our questions regarding \cite{cmbo}.
 We also thank Peter Trapa for his continuous and generous support. 


\section{Preliminaries}
  
\subsection{Geometric Langlands Parameters}\label{lp}

Let $\textmd{k}$ be a nonarchimedean local field of characteristic $0$ with residue field $\textmd{F}_q$
of cardinality $q.$ Let $W_{\fk}$ denote the Weil group  with Inertia subgroup  $I_{\fk}$ and norm $||\; ||.$
Fix $Fr,$  a Frobenius element that generates $W_{\fk}/I_{\fk},$ \cite[(1.4)]{ta}.
Denote by $W'_{\fk}$  the  Weil-Deligne group.  That is,
  \[W'_{\fk} = W_{\fk}\ltimes \C,\]
where  $W_{\fk}$ acts on $\C$ by
 $\omega \cdot z = ||\omega|| \; z$ with  $\omega \in W_{\fk}$ and $z \in \C.$
 \vskip .1in
 
 A Langlands' parameter is a continuous group homomorphism 
 \[\phi: W'_{\fk}\lra \G^{\vee}\]
 satisfying certain compatibility conditions (for example, see \cite{Vo93}).  Attached to a Langlands' parameter $\phi$ satisfying
   $\phi(I_{\fk}) = \mathbb{1}$ 
 and $\phi(Fr) = q^{\frac{1}{2}\it{h}},$ there is a set of irreducible unipotent representations  with cuspidal support and infinitesimal character
 $ q^{\frac{1}{2}\it{h}},$  
  $\Pi_{\phi}^{\text{Lang}}(\textmd{G}(\fk)),$ such that
  
  \[ \Pi_{q^{\frac{1}{2}\it{h}}}^{\text{Lus}}(\textmd{G}(\fk)) = \underset{\substack{ \{\phi: \phi(I_{\fk})= \mathbb{1}\\ \qquad \phi(Fr) = q^{\frac{1}{2}\it{h}}  \}  }}{\bigcup}  \Pi_{\phi}^{\text{Lang}}(\textmd{G}(\fk)).\]
  
  The classification of irreducible modules in $\Pi_{q^{\frac{1}{2}\it{h}}}^{\text{Lus}}(\textmd{G}(\fk))$
  is expressed in terms of the spaces

 \[ \G^{\vee}({\it h}) = Z_{\G^{\vee}}({\it h});\;
 \fgv(2)  = \{ {\it x}\in \fgv : [{\it h}, {\it x}] =  2 \; {\it x}\};\;
  \fgv(- 2)   = \{ {\it x}\in \fgv : [{\it h}, {\it x}] = - 2 \;  {\it x}\}.
   \]
   
   \vskip .1in
    
  The group $\G^{\vee}({\it h})$ acts with finitely many orbits  on both  $\fgv(2)$ and  $\fgv(-2).$
  It is known that the  set of equivalence classes of Langlands' parameters with infinitesimal character $ q^{\frac{1}{2}\it{h}^{\vee}}$
  is in bijection with the set of  $\G^{\vee}({\it h})$-orbits on $\fgv(2)$  (for example, see \cite{Vo93}).
  We write $\S_{\phi} \subset \fgv(2)$ for the $\G^{\vee}({\it h})$-orbit that corresponds to the equivalence class $[\phi].$
  
  \vskip .2in
  
  Let $\text{Per}_{\G^{\vee}(\it{h})}(\fgv(2))$ denote the category of $\G^{\vee}(\it{h})$-equivariant perverse sheaves. The simple objects of this category are the intersection cohomology complexes 
  \[ \{IC(\S, \mathcal L): \mathcal L \text{ is a  simple equivariant local system on  the $\G^{\vee}(\it{h})$-orbit }
  \S \subset \fgv(2) \}.\]
  
\begin{theorem} (Deligne-Langlands-Lusztig)\label{dll}  
There is a bijection 
\begin{align*}
\text{Irr } (\text{Per}_{\G^{\vee}(\it{h})}(\fgv(2)) &\longleftrightarrow \Pi_{q^{\frac{1}{2}\it{h}}}^{\text{Lus}}(\textmd{G}(\fk)\\
IC(\S, \mathcal L ) & \mapsto X:= X(q^{\frac{1}{2}\it{h}}, S, \mathcal L).
\end{align*}
\end{theorem}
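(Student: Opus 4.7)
The plan is to follow the strategy of Kazhdan--Lusztig, extended by Lusztig to the case of arbitrary unipotent cuspidal support. The first step is a reduction to affine Hecke algebras: using the type-theoretic results of \cite{Lus95}, each Bernstein component of $\G(\fk)$ generated by a cuspidal unipotent representation of a parahoric subgroup is equivalent to the category of modules over an endomorphism algebra, which is shown to be an affine Hecke algebra with (possibly unequal) parameters; the Iwahori case is the earlier Borel--Casselman equivalence. Under this dictionary, $\Pi^{\text{Lus}}_{q^{h/2}}(\G(\fk))$ is identified with the irreducible modules of these Hecke algebras whose central character is pinned down by the semisimple element $q^{h/2}$.

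Second, I would apply the geometric construction of Kazhdan--Lusztig: standard modules of the affine Hecke algebra at a given central character are realized inside the equivariant $K$-homology of generalized Springer-type fibers. After fixing the central character coming from $h$ and specializing the Hecke parameter appropriately, the Jacobson--Morozov theorem converts the nilpotent part of a Langlands parameter $\phi$ into an element of $\fgv(2)$, and the $\G^{\vee}(h)$-conjugacy classes of such nilpotents are exactly the orbits $\S \subset \fgv(2)$. Irreducible quotients of the standard modules are then indexed by pairs $(\S, \mathcal L)$ with $\mathcal L$ a simple $\G^{\vee}(h)$-equivariant local system on $\S$, matching precisely the simple objects of $\text{Per}_{\G^{\vee}(h)}(\fgv(2))$, which are by construction the $\IC(\S, \mathcal L)$. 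Tracking central characters and nilpotent data through this chain of equivalences yields the claimed bijection $\IC(\S, \mathcal L) \mapsto X(q^{h/2}, \S, \mathcal L)$.

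The main obstacle is the first step beyond the Iwahori case: identifying the endomorphism algebras with affine Hecke algebras carrying the correct unequal parameters requires Lusztig's classification of cuspidal unipotent representations of the finite reductive quotients of parahorics, together with a careful check that this reduction is compatible with generalized Springer theory on the dual-group side. Once these compatibilities are in place, the geometric parameterization is uniform across all cuspidal supports, and the bijection is forced by matching central characters on the Hecke side with the semisimple part $q^{h/2}$ on the Galois side.
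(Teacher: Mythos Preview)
Your sketch is a reasonable outline of the Kazhdan--Lusztig / Lusztig approach in the literature, but there is nothing to compare it against: the paper does not prove this theorem. It is stated as a background result, attributed to Deligne--Langlands--Lusztig and left without proof; the paper simply uses the bijection as input for its characteristic cycle computations. So the exercise of comparing your argument to ``the paper's own proof'' is vacuous here.

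That said, one remark on content: your sketch parameterizes irreducibles by pairs $(\S,\mathcal L)$ with $\mathcal L$ ranging over \emph{all} simple equivariant local systems. In the Kazhdan--Lusztig framework the relevant representations of the component group are those appearing in the Springer-type fiber (i.e., those of ``Springer type''), and in Lusztig's extension to general unipotent cuspidal support it is the generalized Springer correspondence that picks out which local systems occur for which block. The clean statement in the paper, with the full category $\operatorname{Per}_{\G^\vee(h)}(\fgv(2))$ on one side, tacitly packages together all cuspidal supports at the given infinitesimal character, so that every equivariant local system is accounted for. Your outline gestures at this (``uniform across all cuspidal supports''), but if you were to actually carry it out you would need to make that bookkeeping explicit.
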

\vskip.1in

If $X= X(q^{\frac{1}{2}\it{h}}, \S, \mathcal L),$ we say that the pair $(\S, \mathcal L)$
is the {\it geometric Langlands' parameter of } X.

\vskip .2in

\subsection{Orbits duality}\label{od}

Identify $\fgv(2)^*$ with $\fgv(-2)$ by using a fixed non-degenerate invariant symmetric form on $\fgv.$
\vskip .2in

\begin{lemma}\cite[Corollary 2]{Pja75}
For each $\G^{\vee}(\it{h})$-orbit $\S \subset \fgv(2)$, there exists a unique $\G^{\vee}(\it{h})$-orbit
$\S^{Pt} \subset \fgv(-2)$ so that the $\G^{\vee}(\it{h})$-equivariant isomorphism
\begin{align*}
T^*(\fgv(2)) & \lra T^*(\fgv(-2))\\
(x,\xi) & \mapsto (\xi, x)
\end{align*}
restricts to an isomorphism of co-normal bundles
\[ \overline{T^*_{\S}(\fgv(2)) } \simeq \overline{T^*_{\S^{Pt}}(\fgv(-2))}.\]
\end{lemma}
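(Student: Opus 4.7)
The plan is to realize the closures $\overline{T^*_{\S}(\fgv(2))}$ as the irreducible components of a subvariety of $\fgv(2) \times \fgv(-2)$ that is manifestly invariant under the swap $\sigma \colon (x,\xi) \mapsto (\xi, x)$. Writing $\fgv(0)$ for the $0$-eigenspace of $\mathrm{ad}(h)$ (the Lie algebra of $\G^{\vee}(h)$) and using the non-degenerate invariant form to identify $\fgv(2)^{*} \cong \fgv(-2)$, I would introduce
\[ N := \bigl\{(x,\xi) \in \fgv(2) \times \fgv(-2) : \langle [X,x], \xi \rangle = 0 \text{ for all } X \in \fgv(0)\bigr\}, \]
the zero-fiber of the moment map for the $\G^{\vee}(h)$-action on $T^{*}(\fgv(2))$. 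Invariance of the form gives $\langle [X,x], \xi\rangle = -\langle x, [X,\xi]\rangle$, so $N$ is $\sigma$-stable.

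The key observation is that for any $\G^{\vee}(h)$-orbit $\S \subset \fgv(2)$ and any $x \in \S$, the tangent space $T_{x}\S$ equals $[\fgv(0), x]$, and its annihilator in $\fgv(-2)$ is precisely the fiber of $N$ over $x$. This yields the decomposition
\[ N \;=\; \bigsqcup_{\S \subset \fgv(2)} T^{*}_{\S}(\fgv(2)) \;=\; \bigsqcup_{\mathcal{T} \subset \fgv(-2)} T^{*}_{\mathcal{T}}(\fgv(-2)), \]
the second equality by the same argument after swapping the roles. Both unions are finite because $\G^{\vee}(h)$ acts with finitely many orbits on each of $\fgv(\pm 2)$, a standard consequence of Vinberg's theory applied to the $\mathbb{Z}$-grading $\fgv = \bigoplus_{k} \fgv(k)$.

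A dimension count shows that each conormal bundle $T^{*}_{\S}(\fgv(2))$ is a vector bundle over $\S$ of rank $\dim \fgv(2) - \dim \S$, hence irreducible of dimension $\dim \fgv(2)$. Distinct orbits $\S \neq \S'$ give distinct closures, since their projections to $\fgv(2)$ are the different orbit closures $\overline{\S}, \overline{\S'}$. Therefore the $\overline{T^{*}_{\S}(\fgv(2))}$ are exactly the irreducible components of $N$, and the analogous assertion holds on the $\fgv(-2)$ side. As $\sigma$ preserves $N$ and is a homeomorphism, it permutes these components, which forces a unique orbit $\S^{Pt} \subset \fgv(-2)$ with $\sigma\bigl(\overline{T^{*}_{\S}(\fgv(2))}\bigr) = \overline{T^{*}_{\S^{Pt}}(\fgv(-2))}$.

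The main subtlety is ensuring $N$ is equidimensional, so that no hidden irreducible components slip outside the conormal-bundle decomposition; this is guaranteed by the Vinberg finiteness theorem for orbits on $\fgv(\pm 2)$, which I would invoke as a black box. Everything else reduces to elementary linear algebra and the fact that an irreducible closed subset of maximal dimension inside an equidimensional variety is an irreducible component, so this is the single substantive input.
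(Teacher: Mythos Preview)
The paper does not supply its own proof of this lemma: it is quoted directly from Pyasetski\u{\i} \cite[Corollary 2]{Pja75}, with the finiteness of $\G^{\vee}(h)$-orbits on $\fgv(\pm 2)$ already recorded in Subsection~\ref{lp}. Your argument is correct and is in fact a faithful reconstruction of Pyasetski\u{\i}'s original proof: the variety $N$ you introduce is exactly the zero fiber of the moment map, its symmetry under $(x,\xi)\mapsto(\xi,x)$ is the heart of the matter, and the identification of the conormal closures with the irreducible components of $N$ is precisely how the bijection is produced. One small comment: you phrase equidimensionality of $N$ as a separate subtlety requiring Vinberg's theorem, but in fact it is an immediate consequence of the conormal decomposition you have already established (each $T^*_{\S}(\fgv(2))$ has dimension $\dim\fgv(2)$), so the only genuine input is the finiteness of orbits, which you correctly isolate.
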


The assignment 

\begin{equation}\label{Pt}
\S\mapsto \S^{Pt} \text{ is a bijection of orbits.}
\end{equation}

\vskip .3in

There is a second canonical bijection between 
$\G^{\vee}({\it h})$-orbits on  $\fgv (2)$ and  those on $\fgv (-2).$
If $\S = \G^{\vee}({\it h}) \cdot {\it x} \subset \fgv(2),$ then there exists a map
\[di: \mathfrak{sl}_2 \lra \fgv \text {such that }\]
$di\begin{pmatrix}  0 & 1\\0 & 0\end{pmatrix}       = {\it x} \in \fgv(2);    
di\begin{pmatrix}  0& 0 \\1 & 0\end{pmatrix} = {\it y}\in \fgv(-2); \text{ and }
di\begin{pmatrix}  1& 0\\0 & -1\end{pmatrix} =\it{h}.$
\vskip .1in

The assignment
\begin{equation}\label{trans}
\S= \G^{\vee}({\it h}) \cdot {\it x} \mapsto  \; ^t\S = \G^{\vee}(\it{h}) \cdot{ \it y}   \text{ is a bijection of orbits.}
\end{equation}

 One can check that  the component groups $A(\S)= Z_{\G^{\vee}(\it{h})}(\it{x})/Z_{\G^{\vee}(\it{h})}(\it{x})_0$
and $A(^t\S)$ are isomorphic. We denote by $\mathcal{L} \mapsto^t\mathcal{L}$ the corresponding correspondence on equivariant local systems.
\vskip .2in

 We conclude that the map

\begin{equation}\label{hat}
\S \mapsto \widehat{\S} = ^t\S^{Pt}
\end{equation}
is an involution on the set of $\G^{\vee}({\it h})$-orbits on $\fgv(2).$  Compare to  \cite[page 38]{cfmmx}
\vskip .2in

\subsection{Fourier Trasform and Aubert-Zelevinski duality}\label{FT}
We  denote  by Ft the Fourier transform from  $\text{Per}_{\G^{\vee}(\it{h})}(\fgv(2))$ to
$\text{Per}_{\G^{\vee}(\it{h})}(\fgv(-2))$  in the sense of \cite{ks}, \cite[Section 3.2]{em}.
Then, for a equivariant local system $(\fts, \mathcal L_{Ft})$ with $ \fts \subset \fgv(-2),$ we have 
 
\[ Ft\big(IC(\S, \mathcal L)\big)  =  IC(\fts, \mathcal L_{Ft}).\]

\begin{conjecture}\label{cj1}  Assume $X = X(q^{\frac{1}{2}\it{h}}, \S, \mathcal{L})$
is an irreducible unipotent representation in $\Pi_{q^{\frac{1}{2}\it{h}}}^{\text{Lus}}(\G(\fk)).$
It is expected that 
\[AZ(X) = X(q^{\frac{1}{2}{\it h}}, ^t\fts, ^t\mathcal{L}_{Ft}).\]
\end{conjecture}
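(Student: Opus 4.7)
The plan is to factor the conjecture through the (extended) affine Hecke algebra and compare two separately established involutions. On the $p$-adic side, work of Aubert identifies $AZ$ with the Iwahori--Matsumoto involution $\iota_{IM}$ in the Iwahori-spherical case; for general unipotent cuspidal support the analogous statement follows from Lusztig's reduction theorems in the setting of graded affine Hecke algebras attached to cuspidal local systems. So the first step is to fix an equivalence (Borel--Kazhdan--Lusztig or its cuspidal generalization) under which Theorem~\ref{dll} is realized, and to record that $AZ(X)$ corresponds to $\iota_{IM}(M_X)$, where $M_X$ is the Hecke module attached to $X = X(q^{h/2},\S,\mathcal L)$.

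On the geometric side, the theorem of Evens--Mirkovi\'c realizes $\iota_{IM}$ as a geometric Fourier transform on the Steinberg variety. Lifted to the graded generalized Springer setting, it should translate into the statement that, under Theorem~\ref{dll}, the involution $\iota_{IM}$ corresponds to the functor $\mathrm{Ft}\colon \text{Per}_{\G^{\vee}(h)}(\fgv(2)) \to \text{Per}_{\G^{\vee}(h)}(\fgv(-2))$ followed by the transposition identification~\eqref{trans} induced by the $\mathfrak{sl}_2$-triple. Since by definition $\mathrm{Ft}(IC(\S,\mathcal L)) = IC(\fts,\mathcal L_{Ft})$, applying the subsequent transposition yields $IC({}^t\fts, {}^t\mathcal L_{Ft})$ back in $\text{Per}_{\G^{\vee}(h)}(\fgv(2))$, which Theorem~\ref{dll} converts to $X(q^{h/2}, {}^t\fts, {}^t\mathcal L_{Ft})$, as required.

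The main obstacle will be tracking the local-system data carefully through all these identifications. The transposition on orbits lifts to local systems via a chosen isomorphism $A(\S)\simeq A({}^t\S)$, and one must check that this isomorphism agrees with the one arising intrinsically from the compatibility of $\mathrm{Ft}$ with the $\mathfrak{sl}_2$-structure; otherwise the identity could hold only up to a sign character on the component group. A subordinate issue is the compatibility of perverse shifts and Tate twists under the combined operation, which affects the absolute normalization of $AZ$ (hence Aubert's qualifier ``normalized''). In practice, I would first verify both matches explicitly on standard modules and on Kazhdan--Lusztig basis elements in small rank, in particular for split $F_4$ at the $F_4(a_3)$ infinitesimal character using the tables of \cite{ci08,ci22}, before attempting to propagate the identity to arbitrary infinitesimal characters via compatibility with parabolic induction and Jacquet restriction.
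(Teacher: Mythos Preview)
The paper does not prove this statement: it is stated as Conjecture~\ref{cj1}, and the only accompanying justification is the Remark immediately following it, which records that the conjecture is known for representations with Iwahori-fixed vectors by Evens--Mirkovi\'c~\cite{em}. There is therefore no ``paper's own proof'' to compare against.

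Your proposal is a reasonable outline of the known Iwahori-spherical case: the identification of $AZ$ with the Iwahori--Matsumoto involution and the Evens--Mirkovi\'c realization of the latter via Fourier transform is exactly the content of~\cite{em}, and that is all the paper invokes. Where you go beyond this --- asserting that ``for general unipotent cuspidal support the analogous statement follows from Lusztig's reduction theorems'' and that the Fourier transform on the Steinberg variety ``should translate'' to $\mathrm{Ft}$ on $\fgv(2)$ composed with transposition --- is precisely the open part of the conjecture. These steps are not established in the literature in the generality you need, and you yourself flag the genuine obstacles (matching component-group isomorphisms, sign characters, normalization of shifts and twists). So what you have written is a plausible research programme, not a proof; the paper treats the general statement as open and uses only the Iwahori-spherical case (via Remark~\ref{relevant} and the tables in Section~\ref{danaz}) in its actual computations for $F_4$.
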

\vskip .1in

\begin{remark}
 The conjecture is known to hold for all  representations with Iwahori-fixed vectors by \cite{em}.
\end{remark}
\vskip .2in

\subsection{Arthur Parameters}\label{ap}\

\begin{definition} \cite[Section 6]{Art89} An Arthur parameter for $\G$ is a continuous homomorphism
\[\psi: W'_{\fk}\times\text{SL}(2) \lra \G^{\vee},\] such that
\begin{enumerate}
\item the restriction $\psi\vert_{W'_{\fk}}$ is a tempered Langlands parameter, and
\item the restriction $\psi\vert_{\text{SL}(2)}$ is algebraic.
\end{enumerate}

\end{definition}

Attached to an Arthur parameter $\psi$,  is a Langlands parameter $\phi_{\psi}$
and an infinitesimal character $\G^{\vee}\cdot\lambda_{\psi}$ given by
\begin{align*}
\phi_{\psi}: W'_{\fk}&\lra \G^{\vee}\\
w & \mapsto \psi \big(w, \begin{pmatrix} ||w||^{\frac{1}{2}} & 0\\  0 & ||w||^{-\frac{1}{2}}\end{pmatrix}\big);\\
\lambda_{\psi}: W_{\fk} &\lra \G^{\vee},    \;
\lambda_{\psi} := \phi_{\psi}\vert_{W_{\fk}}.
\end{align*}
\vskip .1in

It is expected that for each Arthur parameter $\psi$  there are  sets, consisting of irreducible irreducible representations
of $G(\fk)$,  $\Pi_{\psi}^{\text{Art}}(\G(\fk))$  (Arthur packet) and $\Pi_{\phi_{\psi}}^{\text{Lang}} (\G(\fk))$ (Langlands' packet), satisfying various conditions. In particular, it is conjectured that
\begin{align*}
&\Pi_{\phi_{\psi}}^{\text{Lang}} (\G(\fk))\subset\Pi_{\psi}^{\text{Art}}(\G(\fk));\\
&\Pi_{\psi}^{\text{Art}}(\G(\fk)) \text{ consists of irreducible unitary representations.}\\
\end{align*}

There is no general definition of $\Pi_{\psi}^{\text{Art}}(\G(\fk)).$
Arthur parameters with $\psi\vert_{W'_{\fk}}$  trivial are called {\it basic Arthur parameters.}
The set of such parameters is in bijection with the set of nilpotent orbits on $\fgv.$ For  a nilpotent orbit $\mathcal O \subset \fgv,$
in \cite{cmbo}, the authors defined  the notion of  {\it  basic Arthur packet}.
They  also introduced a larger set $\Pi^{\text{weak}}_{\psi_{\mathcal O}}(\G(\fk)),$ 
 {\it a  weak Arthur packet}. They  conjectured that {\it weak Arthur packets} are unions of {\it Arthur packets.}
 This is Conjecture \ref{cj2}, which uses the language of {\it simplified Arthur parameters.}

\begin{definition} A simplified Arthur parameter is a continuous homomorphism

\begin{align*}
&\widetilde{\psi}: W_{\fk}\times \text{SL}_{\text{Lang}}(2)\times\text{SL}_{\text{Art}}(2) \lra \G^{\vee}\\
&\widetilde{\psi}(W_{\fk}) \text{  is compact and consists of semi-simple elements,}\\
&\widetilde{\psi}\vert_{ \text{SL}_{\text{Lang}}},\text{ and }  \widetilde{\psi}\vert_{\text{SL}_{\text{Art}}}   \text{ are algebraic.}
\end{align*}
\end{definition}
There is a notion of infinitesimal parameter $\Lambda_{\widetilde{\psi}}: W_k \lra \G^{\vee}$  attached to a simplified Arthur parameter $\widetilde{\psi},$
see \cite[Section 4.1]{cfmmx}.

\vskip .2in

\begin{remark}
 The set of Arthur parameters is in bijection with the set of simplified Arthur parameters,
 see \cite[page 278]{Kn}. This bijection at the level of parameters induces a bijection between
 Arthur packets and simplified Arthur packets. The notions of infinitesimal parameter  and infinitesimal character
 do not match. 
 \end{remark}
 
 Relevant to Conjecture \ref{cj2} is the set  of simplified Arthur parameters $\widetilde{\psi}$ such that
 \begin{equation}\label{cn}
 \widetilde{\psi}\vert_{W_{\fk}} \text{ is trivial}, \Lambda_{\widetilde{\psi}}(I_k) ={ \bf 1}, \text{ and }
 \Lambda_{\widetilde{\psi}}(Fr) = q^{\frac{1}{2}\it{h}},
 \end{equation}
 where $\it{h}$ is the middle element of an $\mathfrak{sl}(2)$-triple.
 For such a parameter 
$\widetilde{\psi}$ 
 set
\begin{alignat}{4}\label{simp}
&\widetilde{\psi}_1 &= \widetilde{\psi}\vert_{\text{SL}_{\text{Lang}}\times 1} \qquad\qquad &\widetilde{\psi}_2 &= \widetilde{\psi}\vert_{1 \times \text{SL}_{\text{Art}}}\\\nonumber
&{\it x}_{\widetilde{\psi}} &= d\widetilde{\psi}_1 \begin{pmatrix}  0 & 1\\0 & 0\end{pmatrix}  \qquad \qquad &{\it y}_{\widetilde{\psi}} &= d\widetilde{\psi}_2 \begin{pmatrix}  0 & 1\\0 & 0\end{pmatrix} \\\nonumber 
&{\nu}_{\widetilde{\psi}}  &= d\widetilde{\psi}_1 \begin{pmatrix}  0 & 0\\1 & 0\end{pmatrix}  \qquad\qquad   &\xi_{\widetilde{\psi}} &= d\widetilde{\psi}_2 \begin{pmatrix} 
0 & 0\\1 & 0\end{pmatrix}.
\end{alignat}

\begin{theorem}\cite[Proposition 6.6.1, Lemma 6.4.2]{cfmmx}\label{cun}
If  $\widetilde{\psi}$ is a simplified Arthur parameter satisfying (\ref{cn}), then
\begin{enumerate}
\item $\S_{\psi} := \G^{\vee}({\it h}) \cdot {\it x}_{\widetilde{\psi}} \subset \fgv(2)$
\item $ \G^{\vee}({\it h}) \cdot ({\it x}_{\widetilde{\psi}}, \xi_{\widetilde{\psi}}) $ is open and dense  in $T^*_{\S_{\psi} }(\fgv(2)),$ 
\item $ \G^{\vee}({\it h}) \cdot {\xi}_{\widetilde{\psi}} = S_{\psi}^{Pt}.$
\end{enumerate}
\end{theorem}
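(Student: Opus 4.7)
The plan is to exploit the commutativity of $\text{SL}_{\text{Lang}}(2)$ and $\text{SL}_{\text{Art}}(2)$ inside the simplified Arthur parameter. Set $h_1 := d\widetilde{\psi}_1\bigl(\begin{smallmatrix}1&0\\0&-1\end{smallmatrix}\bigr)$ and $h_2 := d\widetilde{\psi}_2\bigl(\begin{smallmatrix}1&0\\0&-1\end{smallmatrix}\bigr)$; then the data $\{x_{\widetilde{\psi}}, h_1, \nu_{\widetilde{\psi}}\}$ and $\{y_{\widetilde{\psi}}, h_2, \xi_{\widetilde{\psi}}\}$ form commuting $\mathfrak{sl}_2$-triples in $\fgv$. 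A direct computation with the defining formula for $\Lambda_{\widetilde{\psi}}$, using that $\widetilde{\psi}\vert_{W_\fk}$ is trivial, gives $\Lambda_{\widetilde{\psi}}(Fr) = q^{(h_1+h_2)/2}$, so condition (\ref{cn}) forces $h = h_1 + h_2$. Since $x_{\widetilde{\psi}}$ has $h_1$-weight $2$ and, by commutativity with $\text{SL}_{\text{Art}}$, has $h_2$-weight $0$, it has $h$-weight $2$; this gives $x_{\widetilde{\psi}}\in\fgv(2)$ and proves (1). The same argument places $\xi_{\widetilde{\psi}}$ in $\fgv(-2)$.

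For (2), commutativity also gives $[x_{\widetilde{\psi}},\xi_{\widetilde{\psi}}] = 0$. Using the Killing form to identify $T^*\fgv(2)\cong\fgv(2)\oplus\fgv(-2)$ as a $\G^{\vee}(h)$-space, the conormal fiber to $\S_{\psi}$ at $x_{\widetilde{\psi}}$ is the annihilator of the tangent space $[\fgv(0),x_{\widetilde{\psi}}]$, which by $\G^{\vee}$-invariance of the form equals $\ker\bigl(\mathrm{ad}(x_{\widetilde{\psi}})\colon\fgv(-2)\to\fgv(0)\bigr)$; thus $(x_{\widetilde{\psi}},\xi_{\widetilde{\psi}})$ automatically lies in $T^*_{\S_{\psi}}(\fgv(2))$. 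Upgrading this to openness is equivalent, by decomposing the orbit through its first projection, to showing that $\xi_{\widetilde{\psi}}$ lies in the open $Z_{\G^{\vee}(h)}(x_{\widetilde{\psi}})$-orbit on this kernel. The plan for the density step is to decompose $\fgv$ as an $\text{SL}_{\text{Lang}}\times\text{SL}_{\text{Art}}$-module into summands $V(a)\boxtimes V(b)$, identify the conormal fiber with the $\text{SL}_{\text{Lang}}$-highest-weight vectors of $h$-weight $-2$, and then observe that $\xi_{\widetilde{\psi}}$ restricts to the principal nilpotent on each $\text{SL}_{\text{Art}}$-summand of this slice, hence becomes generic for the residual action of the reductive part of $Z_{\G^{\vee}(h)}(x_{\widetilde{\psi}})$ via a Jacobson--Morozov dimension count.

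Part (3) then follows from (2) and the Pyasetskii duality recalled in Section \ref{od}: the $\G^{\vee}(h)$-equivariant projection $\pi_-\colon\overline{T^*_{\S_{\psi}}(\fgv(2))}\to\fgv(-2)$ has image $\overline{\S_{\psi}^{Pt}}$, and the open orbit $\G^{\vee}(h)\cdot(x_{\widetilde{\psi}},\xi_{\widetilde{\psi}})$ projects onto an open dense subset of $\overline{\S_{\psi}^{Pt}}$; but this image is the single orbit $\G^{\vee}(h)\cdot\xi_{\widetilde{\psi}}$, which must therefore coincide with $\S_{\psi}^{Pt}$. The main obstacle is the density claim in (2): although membership of $(x_{\widetilde{\psi}},\xi_{\widetilde{\psi}})$ in the conormal variety is essentially automatic from $[x_{\widetilde{\psi}},\xi_{\widetilde{\psi}}]=0$, showing it actually sits in the top-dimensional orbit there requires that the $\mathfrak{sl}_2\oplus\mathfrak{sl}_2$-structure deliver enough commuting deformations, and this is where the specific genericity built into the Arthur $\mathfrak{sl}_2$-partner plays the decisive role.
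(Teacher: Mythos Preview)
The paper does not prove this theorem; it is imported verbatim from \cite[Proposition~6.6.1 and Lemma~6.4.2]{cfmmx}, so there is no in-house argument to compare against. Your reductions are sound: part~(1) follows immediately from $h=h_1+h_2$ and the commutativity of the two $\mathfrak{sl}_2$-triples, and part~(3) follows from part~(2) by Pyasetskii duality exactly as you say. The membership $(x_{\widetilde{\psi}},\xi_{\widetilde{\psi}})\in T^*_{\S_\psi}(\fgv(2))$ via $[x_{\widetilde{\psi}},\xi_{\widetilde{\psi}}]=0$ is also correct.

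The genuine gap is the density step in~(2), which you flag yourself but whose proposed resolution does not work as written. The conormal fiber at $x_{\widetilde{\psi}}$ consists of the $e_1$-highest-weight vectors in $\fgv(-2)$; in a summand $V(a)\boxtimes V(b)$ this is the single line $v_a\otimes V(b)_{-a-2}$ (when nonzero), which is \emph{not} $\text{SL}_{\text{Art}}$-stable, so the phrase ``$\xi_{\widetilde{\psi}}$ restricts to the principal nilpotent on each $\text{SL}_{\text{Art}}$-summand of this slice'' does not parse---$f_2$ moves that line out of the slice. What actually works (and is what \cite{cfmmx} does, in the spirit of \cite[Proposition~22.9]{abv}) is a direct dimension count: the centralizer $Z_{\fgv(0)}(x_{\widetilde{\psi}},\xi_{\widetilde{\psi}})$ consists of vectors that are simultaneously $e_1$-highest and $f_2$-lowest with $h$-weight~$0$, which in $V(a)\boxtimes V(b)$ is the line $v_a\otimes v_{-b}$ precisely when $a=b$; a Clebsch--Gordan computation shows that $\dim\fgv(0)-\dim\fgv(2)$ is \emph{also} the number of summands with $a=b$, and equality of these two numbers is exactly the openness of the orbit. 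Your ``Jacobson--Morozov dimension count'' may have been reaching for this identity, but the intermediate claim you wrote down is not the right one.
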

 \vskip .1in
 
  We identify the simplified Arthur parameter $\widetilde{\psi}$ with the pair of orbits
 \[(\S_{\psi}, ^t\S_{\psi}^{Pt}) = (\S_{\psi}, \widehat{\S_{\psi}}).\]

 If $\widetilde{\psi}$ is a simplified Arthur parameter, then so is 
 \[\widetilde{\psi}^{Tr}:  (w, {\it x}, {\it y}) \mapsto \widetilde{\psi} (w, {\it y}, {\it x}) \text{ for }  {\it x}, {\it y} \in \text{SL}_2. \]
It follows that $\widetilde{\psi}^{Tr}$
  is parameterized by
$(\widehat{\S_{\psi}}, \S_{\psi}),$ see \cite[Corollary 6.6.2]{cfmmx}. 
 
 \vskip .2in
 
  \begin{remark}
 \begin{enumerate}
 \item  Theorem \ref{cun} is analogous to \cite[Proposition 22.9]{abv} for real groups.
 \item In the context of prehomogeneous vectors spaces, the property of a group having a dense orbit in
 a conormal bundle has been studied from the point of view of $\mathcal D$-modules, see the notion of good Lagrangians \cite[Definition 4.5]{micro}, (see also \cite{lw}).
 \item It is expected that
 \[AZ\big(\Pi_{\psi}^{\text{Art}} (\G(\fk))\big) = \Pi_{\psi^{Tr}}^{\text{Art}}(\G(\fk)).\]
 \end{enumerate}
  \end{remark}

\vskip .2in

 \subsection{A conjecture by Ciubotaru, Mason-Brown and Okada}\label{cmbo}

The set of $\G^{\vee}$-equivalence classes of {\it basic Arthur parameters}  is in bijection with the set of 
$\G^{\vee}$-nilpotent orbits  on $\fgv.$
For $\mathcal O \subset \fgv$ we write $\psi_{\mathcal O}$ for the corresponding {\it basic Arthur parameter.} Then,
 Definition 2.7.1 in \cite{cmbo} reads
 \[ \Pi_{\psi_{\mathcal O}}^{\text{Art}}(\G(\fk)) = \{ X \in \Pi_{q^{\frac{1}{2}\it{h}}}^{\text{Lus}}(\textmd{G}(\fk)):
 AZ(X) \text{ is tempered}\}.\]
 \vskip .2in

The notion of  p-adic {\it weak Arthur packet} is defined in
analogy with the case of  reductive groups defined over $\R$, \cite[Chapter 23]{abv}.
An explicit description is given in \cite[Corollary 3.1.1]{cmbo}. For a nilpotent orbit $\mathcal O \subset \fgv,$ we write
${\it sp}(\mathcal O)$ for the special piece  of $\mathcal O$, in the sense of  \cite{sp}. Then, 
  
\[\Pi^{\text{weak}}_{\psi_{\mathcal O}}(\G(\fk))  = \{ AZ\big(X(q^{\frac{1}{2}}, \S, \mathcal L) \big) \in 
\Pi_{q^{\frac{1}{2}{\it h}}}^{\text{Lus}}\big(\textmd{G}(\fk)\big) : \G^{\vee}\cdot \S \in {\it sp}(\mathcal O)\}.\]

\vskip .2in

\begin{conjecture}\cite[Conjecture  3.1.2]{cmbo}\label{cj2}
Let $\{{\it e}, {\it h}, {\it f} : {\it h}\in \ftv_r\} $ be a $\mathfrak{sl}_2$-triple  attached to a nilpotent orbit $\mathcal O \subset \fgv.$    There is a set  $\mathcal T$  of simplified Arthur parameters satisfying conditions (\ref{cn})
and an appropriate notion of Arthur packet such that
\[\Pi^{\text{weak}}_{\psi_{\mathcal O}}(\G(\fk))  = \underset{\widetilde{\psi}_j \in \mathcal T}{\bigcup} 
\Pi_{\widetilde{\psi}_j}^{\text{Art}}(\G(\fk)).\]
\end{conjecture}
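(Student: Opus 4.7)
My plan is to take the ``appropriate notion of Arthur packet'' in Conjecture \ref{cj2} to be the Vogan micro-packet attached to a simplified Arthur parameter, and to verify the conjecture in the case $\G^{\vee} = F_4$, $\mathcal{O} = F_4(a_3)$. For a simplified Arthur parameter $\widetilde{\psi}$ satisfying (\ref{cn}), identified via Theorem \ref{cun} with the pair $(\S_{\psi}, \widehat{\S_{\psi}})$, I declare the micro-packet to consist of those $X(q^{\frac{1}{2}\it{h}}, \S, \mathcal{L})$ whose characteristic cycle $\CC(\IC(\S, \mathcal{L}))$ contains the closed conormal $\overline{T^{*}_{\widehat{\S_{\psi}}}(\fgv(2))}$ with non-zero multiplicity. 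The job is then twofold: first, to show that the basic Arthur packet $\Pi^{\text{Art}}_{\psi_{F_4(a_3)}}(\G(\fk))$ from Subsection \ref{cmbo} coincides with the micro-packet attached to the $\widetilde{\psi}$ for which $\G^{\vee} \cdot \S_{\psi} = F_4(a_3)$; and second, to exhibit $\Pi^{\text{weak}}_{\psi_{F_4(a_3)}}(\G(\fk))$ as a union of such micro-packets.

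For the first part, the explicit description $\Pi^{\text{Art}}_{\psi_{F_4(a_3)}}(\G(\fk)) = \{X : AZ(X) \text{ tempered}\}$ combined with Conjecture \ref{cj1}, which is known for Iwahori-spherical representations by \cite{em}, translates temperedness of $AZ(X)$ into the condition that the Fourier-transform parameter is supported on the zero orbit of $\fgv(-2)$; dualizing via (\ref{hat}) this is precisely the microlocal condition on $\overline{T^{*}_{\widehat{\S}}(\fgv(2))}$. For the second part, I would enumerate the special piece $\mathrm{sp}(F_4(a_3))$ in the sense of \cite{sp}, list the $\G^{\vee}({\it h})$-orbits on $\fgv(2)$ projecting onto each $\mathcal{O}' \in \mathrm{sp}(F_4(a_3))$ together with the simple equivariant local systems on them using the Bala--Carter tables and the work of Ciubotaru in \cite{ci08, ci22}, and assemble from Theorem \ref{dll} and the $AZ$-description in Subsection \ref{cmbo} a complete list of representations in $\Pi^{\text{weak}}_{\psi_{F_4(a_3)}}(\G(\fk))$.

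The technical heart is the computation of the characteristic cycles $\CC(\IC(\S, \mathcal{L}))$ for every geometric parameter arising above, carried out by the micro-local techniques of Section 2.6 (the Fourier transform Ft, nearby cycles, and the decomposition theorem, following \cite{em}). From the resulting table, recorded in the Appendix, one reads off the set $\mathcal{T}$ of simplified Arthur parameters whose micro-packets meet $\Pi^{\text{weak}}_{\psi_{F_4(a_3)}}(\G(\fk))$, and verifies both inclusions by direct inspection. The main obstacle is the cycle computation itself: in exceptional types the non-maximal conormal multiplicities are not forced by the decomposition theorem alone, so one must supplement it with induction/restriction identities, explicit $\D$-module analysis of the prehomogeneous strata in $\fgv(2)$, and the compatibility of $\CC$ with the Pyasetskii involution (\ref{Pt}). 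Once this table is in hand, the Section 4 argument becomes formal: granting Conjecture \ref{cj1}, the involution $AZ$ corresponds to Ft composed with the orbit involution (\ref{hat}), and the general equality of weak Arthur packets with unions of micro-packets follows from the definitions.
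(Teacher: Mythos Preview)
Your strategy matches the paper's: take Vogan micro-packets as the candidate Arthur packets, compute all characteristic cycles at the $F_4(a_3)$ infinitesimal character, read off the micro-packets, and verify the decomposition of the weak packet by inspection; the general Section~\ref{last} reduction to Conjecture~\ref{cj1} is also the same. Two points deserve sharpening.

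First, the characteristic cycle computation in the paper is not carried out via nearby cycles, the decomposition theorem, or induction/restriction. The method is more specific and more elementary: the local Euler characteristics $\chi^{\text{loc}}_{\S''}(\IC(\S,\mathcal L))$ are read off directly from Ciubotaru's Kazhdan--Lusztig tables \cite{ci08} via (\ref{kluse}); then the Fourier/Pyasetskii identity (\ref{ident}), together with the explicit $AZ$ pairing of geometric parameters recorded in Table~2, turns the vanishing of known micro-multiplicities into a linear system in the unknown constants $c(\S',\S'')$, which the Appendix solves. One constant $c=c(\S_4,\S_{11})\geq 2$ remains undetermined but is shown not to affect any micro-packet. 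The single non--Iwahori-spherical parameter $(\S_{11},(1^4))$ (the representation $X_5$) falls outside the range of \cite{em} and is treated separately by a $\mathcal D$-module argument on the localization $(\mathcal O_V)_f$ (Proposition~\ref{missing}); your plan does not account for this case.

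Second, your temperedness translation is backwards: $AZ(X)$ tempered means the Langlands orbit of $AZ(X)$ is the \emph{open} orbit $\S_{11}\subset\fgv(2)$, not the zero orbit in $\fgv(-2)$. The passage to $\Pi^{\text{mic}}_{\S_0}$ then goes through (\ref{ident}) and $\widehat{\S_0}=\S_{11}$. In fact the paper does not argue along these lines at all: it simply quotes the explicit basic packet $\{X_5,X_{13},X_{17},X_{19},X_{20}\}$ from \cite[Section~4]{cmbo} and checks by inspection that it equals the computed $\Pi^{\text{mic}}_{\S_0}$.
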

\vskip .2in

In this paper, we verify Conjecture \ref{cj2}
when $\G^{\vee} = F_4$ and $\mathcal O$ is the $F_4(a_3)$-nilpotent orbit. In Section \ref{last}, we show in general that if  Conjecture \ref{cj1} holds, then so does Conjecture \ref{cj2}.
\vskip .2in

\subsection{Micro-Packets}\label{mp}

By Theorem \ref{dll},  $\Pi_{q^{\frac{1}{2}{\it h}}}^{\text{Lus}}(\textmd{G}(\fk))$
is in bijection with the set
\begin{equation*}
\{IC(\S, \mathcal L) : \S \subset \fgv(2) \text{ and } \mathcal L \text{ is a simple equivariant local system on the orbit } \S\}.
\end{equation*}
\vskip .1in
To each such intersection complex $IC(\S, \mathcal L)$, we can attach
non-negative integers $\chi^{\text{mic}}_{\S'}(IC(\S, \mathcal L))$ for any orbit $\S'\subset \fgv(2)$ such that the {\it characteristic cycle}
of  $IC(\S, \mathcal L)$ is 
\begin{equation}\label{cc1}
 CC(IC(\S, \mathcal L)) = \underset{\S'}{\sum} \;\chi^{\text{mic}}_{\S'}(IC(\S, \mathcal L))\; \overline{T^*_{\S'}(\fgv(2))}.
\end{equation}

If $\S'\subset \fgv(2)$ is a $\G^{\vee}(\it{h})$-orbit,  in analogy to \cite[Definition 22.6]{abv}, we define 
\[\Pi^{\text{mic}}_{\S'} = \{ X(q^{\frac{1}{2}{\it h}}, \S, \mathcal L) \in \Pi_{q^{\frac{1}{2}\it{h}}}^{\text{Lus}}(\textmd{G}(\fk)): \chi^{\text{mic}}_{\S'}(IC(\S, \mathcal L)) \neq 0\}.\]

\vskip .2in

When $\G^{\vee}= F_4$  and $\mathcal O = F_4(a_3),$ we compute the characteristic cycles of all relevant
IC-complexes. The computation leads to explicit descriptions of the relevant micro-packets. We verify that these micro-packets satisfy all
the expectations listed in subsection \ref{ap}.
\vskip .3in

We conclude this section by briefly describing the techniques we use to  compute micro-local multiplicities.

\begin{definition}\label{local}
If ${\bf P} \in \text{Per}_{\G^{\vee}({\it h})}(\fgv(2))$ is a simple perverse sheaf  and $\S \subset \fgv(2)$ is a
$\G^{\vee}({\it h})$-orbit, define
\begin{equation}\label{ml}
\chi^{\text{loc}}_{\S}({\bf P}) = \underset{i}{\sum} (-1)^i \; \text{dim}\big( H^i({\bf P})\vert_{\S}\big).
\end{equation}
\end{definition}

By \cite[Theorem 6.3.1]{ka}, \cite[Theorem 8.2]{gz}, there are integers $c(\S, \S')$ so that
\begin{equation}\label{imp} \chi^{\text{mic}}_{\S}({\bf P}) = \underset{\S'}{\sum} c(\S, \S') \; \chi^{\text{loc}}_{\S'}({\bf P}).
\end{equation}
\vskip .1in

[Note that Kashiwara's index formula exhibits $\chi^{\text{loc}}_{\S}({\bf P})$ as an integer combination of various
 $\chi^{\text{mic}}_{\S'}({\bf P}).$ The resulting matrix is inverse to the matrix $\big(c(\S, \S') \big).$]
\vskip .2in

First,  for ${\bf P} = IC(\S, \mathcal L) \in \text{Per}_{\G^{\vee}({\it h})}(\fgv(2)),$ we use the tables of Kazhdan-Lusztig polynomials in \cite{ci08} to determine the local multiplicities  $\chi^{\text{loc}}_{\S}({\bf P}).$ 
 Next, we compute the matrix  $\big(c(\S, \S')\big).$  The Fourier Transform of Subsection \ref{FT} plays a key role
 in our computations. 
 By \cite[Theorem 3.2]{hk} (see also \cite[Equation (4.15)]{lw}), if 
$FT( IC(\S, \mathcal L)) = IC(\S_{Ft}, \mathcal L_{Ft})$, then
 \begin{equation}\label{ft}
 CC(IC(^t \S_{Ft}, ^t \mathcal{L}_{Ft}) ) \, = \, \underset{\S'}{\sum} \;\chi^{\text{mic}}_{\S'}(IC(\S, \mathcal L))\; \overline{T^*_{\widehat{\S'}}(\fgv(2))}.
\end{equation}
Combining equations (\ref{cc1}), (\ref{ml}) and equation (\ref{ft}) we obtain a linear system of equations on the unknowns
 $c(\S, \S')$ that we solve.
 
 \vskip .2in
 \section{Microlocal Packets for $F_4$-split}
 
 \subsection{}
 
 Throughout  this section  $\G = F_4$  and $\G(\fk)$ stands for  the split form of the simple exceptional  p-adic group of type $F_4.$
 We choose a Cartan subalgebra   $\mathfrak h^{\vee} \subset \fgv$ 
 and  an ordered set of simple roots of $\Delta(\fgv, \mathfrak h^{\vee})$, $\{\alpha_1^{\vee}, \alpha_2^{\vee}, \alpha_3^{\vee}, \alpha_4^{\vee}\},$ so that 
 $\alpha_1^{\vee}, \alpha_2^{\vee}$  are long roots.
 \vskip .2in
 
 We let $\mathcal O \subset \fgv$ denote the  complex $\G^{\vee}$-nilpotent orbit of type $F_4(a_3).$ 
 Attached to $\mathcal O$  is a $\mathfrak{sl}_2$-triple $\{{\it e}, {\it h}, {\it f}\}$ with ${\it h}\in \mathfrak h^{\vee}.$
 The semisimple operator $\text{ad}({\it h})$ induces a Lie algebra grading
 \[ \fgv = \fgv(-6) \oplus \fgv(-4)\oplus \fgv(-2)\oplus \fgv(0)\oplus \fgv(2) \oplus )\fgv(4)\oplus \fgv(6),\]
 where $\text{ dim }\fgv(2) = 12,  \text{ dim } \fgv(4) = 6,$ and $\text{ dim } \fgv(6) = 2.$
 
The connected  Levi subgroup  $\G^{\vee}(\it{h})$ has Lie algebra $\fgv(0)$
 and it acts on $\fgv(2)$  by conjugation. The  orbit $\G^{\vee}(\it{h}) \cdot {\it e}$
 is dense in $\fgv(2).$ 
   The pair $(\G^{\vee}(\it{h}), \fgv(2))$ is a   prehomogeneous vector space equivalent to
  \[\big(\text{GL}(2,\C)\times\text{GL}(3,\C), \C^2\otimes \text{Sym}_2(\C^3)\big).\]
  
  The  unique monic  semi-invariant function  $f$ of 
  $\big(\text{GL}(2,\C)\times\text{GL}(3,\C), \C^2\otimes \text{Herm}_2(\C^3)\big),$ 
  has degree   $12,$  and by \cite{km}, the Bernstein-Sato polynomial (or $b$-function)
of $f$ is 
\begin{equation}\label{eq:bfun}
b_f(s) = (s+1)^4 (s+3/4)^2 (s+5/4)^2 (s+5/6)^2(s+7/6)^2.
\end{equation}

 \vskip .2in
 
 There are  twelve  $\G^{\vee}(\it{h})$-orbits  on $\fgv(2).$ We denote them by  $\S_i$, with $i=0,\dots, 11$. The notation is so that $i\leq j$ implies that $\dim \S_i \leq \dim \S_j.$ The following is the Hasse diagram with respect to containment of orbit closures. See, for example \cite[Section 5]{ci08}:

\[\xymatrix@R-1.4pc{
12: & & \S_{11}^* \ar@{-}[d] & \\
11: & & \S_{10}' \ar@{-}[dl] \ar@{-}[dr] & \\
10: & \S_8 \ar@{-}[dd]\ar@{-}[dr]& & \S'_9 \ar@{-}[dl] \\
9: & & \S_7 \ar@{-}[d]\ar@{-}[ddr] & \\
8: & \S_6 \ar@{-}[d]  & \S_5' \ar@{-}[dl]& \\
7: & \S_3 \ar@{-}[dr] & & \S_4 \ar@{-}[dl] \\
6: & & \S_2' \ar@{-}[d]& \\
4: & & \S_1 \ar@{-}[d]& \\
0: & & \S_0 & \\
}\]
Here, the left column designates the dimensions of orbits. We put a prime symbol $'$ above the orbits that have component groups isomorphic to $\Z/2\Z$, and remark that $\S_{11}$ has component group isomorphic to $S_4$, designated by the star symbol. All the other orbits have trivial component groups.
The open orbit $\S_{11}$ has four equivariant simple local systems indexed by partitions of $4$.
Each partition ${\bf p}$ labels an irreducible representation $\pi_{\bf p}$  of $S_4.$  Their dimensions are
$\text{dim}\; \pi_{(4)} = \text{dim}\; \pi_{(1)^4} = 1; \text{dim}\; \pi_{(31)} = \text{dim}\; \pi_{(211)} = 3$
and $\text{dim}\; \pi_{(22)} = 2.$
\vskip .1in

Recall the notation of  subsection \ref{od} and the involution on the set
of orbits on  $\S\subset \fgv(2)$ given by $\S \mapsto \widehat{\S} = ^t\S^{Pt}.$  
In this example, a direct computation from the definition (or an inspection of the holonomic diagram in \cite{km}) yields 
\begin{equation*}
\begin{tabular}{|c | c| c|  c| c | c | c|}
\hline
$\widehat{\S_{11}} = \S_0 $ & $\widehat{\S_{10}} = \S_1$ &  $\widehat{\S_{9}} = \S_2$ & $\widehat{\S_{8}} = \S_3$&
$\widehat{\S_{7}} = \S_7$ & $\widehat{\S_{6}} = \S_5$ & $ \widehat{\S_{4}} = \S_4$\\
\hline
\end{tabular}
\qquad \text{Table :1}
\end{equation*}
\vskip .2in

\subsection{Unipotent representations of $F_4$-split}\label{danaz}
The results of this subsection are all due to  D.~Ciubotaru, see \cite{ci08}, \cite{ci22} and \cite[Section 4]{cmbo}.
 There are $20$ non-isomorphic irreducible representations of $F_4$-split at infinitesimal
 character $q^{\frac{1}{2}{\it h}},$ where ${\it h}$ is the middle element of the $\mathfrak{sl}_2$-triple
 attached to $\mathcal O$ of type $F_4(a_3).$ We list the representations as  $X_1 \ldots X_{20}.$ 
All the representations in the list  but $X_5$ have Iwahori-fixed vectors. 
There is exactly one non-unitary representation on this list, $X_{16}$, with geometric parameter
$(\S_4, (1)).$

In Table: 2 below, we  summarize information relevant to our work.
\begin{equation*}
\begin{tabular}{|c| c||c|c|}
\hline
$\text{Rep} $ & $\text{Geom. Param.} $& AZ & $\text{Geom. Param.}$\\
\hline
$X_1$ & $(\S_{11}, (4))$ & $X_{20}$& $(\S_0, (1))$\\
\hline
$X_2$ & $(\S_{11}, (31))$ & $X_{19}$& $(\S_1, (1))$\\
\hline
$X_3$ & $(\S_{11}, (22))$ & $X_{17}$& $(\S_2, (1))$\\
\hline
$X_4$ & $(\S_{11}, (211))$ & $X_{13}$& $(\S_5, (1)^2)$\\
\hline
$X_5$ & $(\S_{11}, (1)^4)$ & $X_{5}$& $(\S_{11}, (1)^4)$\\
\hline
$X_6$ & $(\S_{10}, (1))$ & $X_{15}$& $(\S_{3}, (1))$\\
\hline
$X_7$ & $(\S_{10}, (1)^2)$ & $X_{9}$& $(\S_{7}, (1))$\\
\hline
$X_8$ & $(\S_{8}, (1))$ & $X_{8}$& $(\S_{8}, (1))$\\
\hline
$X_{10}$ & $(\S_{9}, (1))$ & $X_{18}$& $(\S_{2}, (1)^2)$\\
\hline
$X_{11}$ & $(\S_{9}, (1)^2)$ & $X_{11}$& $(\S_{9}, (1)^2)$\\
\hline
$X_{12}$ & $(\S_{5}, (1))$ & $X_{14}$& $(\S_{6}, (1))$\\
\hline
$X_{16}$ & $(\S_{4}, (1))$ & $X_{16}$& $(\S_{4}, (1))$\\
\hline
\end{tabular}
\quad \text{Table: 2}
\end{equation*}
\begin{remark}\label{relevant}
 Conjecture \ref{cj1} holds for Iwahori-Spherical representations, see \cite{em}.
 It follows that if $(\S, \S')$ occurs in a row of Table:2,  then
  $\S' = ^t\S_{Ft}.$
 \end{remark}
 
 \subsection{Characteristic Cycles}
 
  In this subsection we set $V =\fgv(2).$  We compute the characteristic cycles of the
 IC-complexes, $\text{IC}(\S, \mathcal L)$ for  $(\S, \mathcal L)$ in Table: 2. 
 \begin{theorem}\label{main}
 
 Let $c := C(\S_4, \S_{11}).$ Then,  $c \geq 2$ and we have:
 \begin{align*}
 \CC(IC(\S_0, 1))  & =  [\ol{T_{\S_0}^* V}],\\
    \CC (IC(\S_1, 1)) & =  [\ol{T_{\S_1}^* V}]+ 3 \; [\ol{T_{\S_0}^* V}],\\
    \CC (IC(\S_2, 1)) & =  [\ol{T_{\S_2}^* V}]+ [\ol{T_{\S_1}^* V}]+ 2\; [\ol{T_{\S_0}^* V}],\\
    \CC (IC(\S_2, 1^2)) & =  [\ol{T_{\S_2}^* V}],\\
    \CC (IC(\S_3, 1)) & =  [\ol{T_{\S_3}^* V}]+ [\ol{T_{\S_1}^* V}],\\
    \CC(IC(\S_4,1) & =  [\ol{T_{\S_4}^* V}],\\
    \CC (IC(\S_5, 1)) & =  [\ol{T_{\S_5}^* V}],\\
    \CC (IC(\S_5, 1^2))& =  [\ol{T_{\S_5}^* V}]+ [\ol{T_{\S_3}^* V}]+ [\ol{T_{\S_2}^* V}]+ 2\; [\ol{T_{\S_1}^* V}]+
    3\; [\ol{T_{\S_0}^* V}], \\
    \CC (IC(\S_6, 1)) & =  [\ol{T_{\S_6}^* V}],\\
    \CC (IC(\S_7, 1))& =  [\ol{T_{\S_{7}}^* V}]+ (c+1)\; [\ol{T_{\S_{4}}^* V}]+ [\ol{T_{\S_{3}}^* V}]+ 2\; [\ol{T_{\S_{2}}^* V}]+[\ol{T_{\S_{1}}^* V}],\\
    \CC (IC(\S_8, 1))& = [\ol{T_{\S_8}^* V}]+ [\ol{T_{\S_7}^* V}]+ 2\; [\ol{T_{\S_6}^* V}]+ 2\; [\ol{T_{\S_5}^* V}]+\; (c-2)[\ol{T_{\S_4}^* V}]+ [\ol{T_{\S_3}^* V}],\\
    \CC (IC(\S_9, 1))& = [\ol{T_{\S_9}^* V}],\\
    \CC (IC(\S_9, 1^2)) & =  [\ol{T_{\S_9}^* V}]+ [\ol{T_{\S_7}^*V}]+ c\; [\ol{T_{\S_4}^* V}]+ [\ol{T_{\S_2}^* V}],\\
    \CC (IC(\S_{10}, 1)) & =  [\ol{T_{\S_{10}}^* V}]+ [\ol{T_{\S_8}^* V}],\\
    \CC (IC(\S_{10}, 1^2)) & = [\ol{T_{\S_{10}}^* V}]+ 2\; [\ol{T_{\S_{9}}^* V}]+ [\ol{T_{\S_{8}}^* V}]+ [\ol{T_{\S_{7}}^* V}]+ (c +1) \; [\ol{T_{\S_{4}}^* V}],\\
    \CC (IC(\S_{11}, (4) )) & = [\ol{T_{\S_{11}}^* V}],\\
  \CC (IC(\S_{11}, (31)) & = 3\; [\ol{T_{\S_{11}}^* V}]+ [\ol{T_{\S_{10}}^* V}],\\
     \CC (IC(\S_{11}, (22)) & = 2\; [\ol{T_{\S_{11}}^* V}]+ [\ol{T_{\S_{10}}^* V}]+ [\ol{T_{\S_{9}}^* V}],\\
    \CC (IC(\S_{11}(211)) & = 3\; [\ol{T_{\S_{11}}^* V}]+ 2\; [\ol{T_{\S_{10}}^* V}]+ [\ol{T_{\S_{9}}^* V}]+ [\ol{T_{\S_{8}}^* X}]+[\ol{T_{\S_{6}}^* V}].
    \end{align*}
\end{theorem}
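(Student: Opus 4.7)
The plan is to express the unknown microlocal multiplicities $\chi^{\text{mic}}_{\S'}(\IC(\S,\L))$ as the solution of an integer linear system obtained by combining Kashiwara's index formula (\ref{imp}) with the Fourier transform identity (\ref{ft}), using as input the local Euler characteristics $\chi^{\text{loc}}_{\S'}(\IC(\S,\L))$ extracted from the Kazhdan--Lusztig tables for split $F_4$ in \cite{ci08}.

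First, I would tabulate the values $\chi^{\text{loc}}_{\S'}(\IC(\S,\L))$ for the $17$ simple perverse sheaves $\IC(\S,\L)$ listed in the theorem and every orbit $\S'\subset\fgv(2)$. These numbers are alternating sums of stalk dimensions of the IC-sheaves, so they can be read off directly from the Kazhdan--Lusztig polynomials of \cite{ci08}. With the closure order given by the Hasse diagram of Section 3.1 this produces a triangular table of integers, which is the basic numerical input for the rest of the argument.

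Next, by Remark \ref{relevant}, Conjecture \ref{cj1} holds for the Iwahori-spherical representations appearing in Table~2, so for each pair $(\S,\S')$ there (the geometric parameters of $X$ and $AZ(X)$) one has $\S'={}^t\fts$. Combined with the involution $\S\mapsto\widehat{\S}$ of Table~1, this identifies the Fourier transform $\IC({}^t\fts,{}^t\L_{Ft})$ of every $\IC(\S,\L)$ in our list, and substitution into (\ref{ft}) yields
\[
\CC\big(\IC({}^t\fts,{}^t\L_{Ft})\big) \;=\; \sum_{\S''}\chi^{\text{mic}}_{\S''}\big(\IC(\S,\L)\big)\,\big[\ol{T^*_{\widehat{\S''}}\fgv(2)}\big].
\]
Each such identity couples the unknown microlocal multiplicities of $\IC(\S,\L)$ with the characteristic cycle of a second IC-sheaf in the list, giving, across the $17$ sheaves, a closed system.

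Finally, combining this with the index formula (\ref{imp}) and the defining relation (\ref{cc1}), I obtain an integer linear system on the multiplicities $\chi^{\text{mic}}$. Since the matrix $\big(c(\S,\S')\big)$ is upper-triangular with $1$'s on the diagonal in the closure order, one solves inductively by walking up the Hasse diagram from $\S_0$ to $\S_{11}$: at each stage the only new unknown in (\ref{ft}) is the multiplicity at the newly added orbit. The main obstacle I expect is the single entry $c=c(\S_4,\S_{11})$: the pair $(\S_4,\S_{11})$ sits in the part of the diagram where the Fourier-transform and index-formula constraints become jointly degenerate, so $c$ is left as a free non-negative integer parameter. The lower bound $c\geq 2$ is then forced a posteriori by requiring the coefficient $c-2$ of $[\ol{T^*_{\S_4}\fgv(2)}]$ in $\CC(\IC(\S_8,1))$ to be a non-negative integer; pinning the value down exactly would require additional geometric input (for example a direct local Euler obstruction computation, or a $b$-function argument starting from (\ref{eq:bfun})), but the theorem as stated expresses every remaining characteristic cycle in terms of this one parameter.
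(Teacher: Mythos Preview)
Your proposal is correct and follows essentially the same strategy as the paper's proof (carried out in detail in Appendix~A): compute the local Euler characteristics from the Kazhdan--Lusztig tables in \cite{ci08}, use the Fourier transform identity (\ref{ft}) together with Tables~1 and~2 to generate linear constraints on the coefficients $c(\S,\S')$, and solve the resulting triangular system orbit by orbit, leaving $c=c(\S_4,\S_{11})$ as the one undetermined parameter with $c\geq 2$ forced by non-negativity of the coefficient $c-2$ in $\CC(\IC(\S_8,1))$. Two small slips worth correcting: there are $19$ (not $17$) IC-sheaves in the list, and the diagonal entries of the matrix $\big(c(\S,\S')\big)$ are $(-1)^{\dim\S}$ rather than all $1$'s, but neither affects the argument.
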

 
 \begin{proof}
 In order to compute
 \begin{equation*}
 CC(IC(\S, \mathcal L)) = \underset{\S'}{\sum} \chi^{\text{mic}}_{\S'}(IC(\S, \mathcal L))\; \overline{T^*_{\S'}(V)},
\end{equation*}
we need to determine each micro-local multiplicity $\chi^{\text{mic}}_{\S'}(IC(\S, \mathcal L)).$
We write, as in  sub-section \ref{mp},
 \begin{equation}\label{eulo}
  \chi^{\text{mic}}_{\S'}({\bf P}) = \underset{\S''}{\sum} c(\S', \S'') \; \chi^{\text{loc}}_{\S''}({\bf P}).
  \end{equation}
 Each local system $\mathcal L$ on $\S$ determines an irreducible representation of the component group of $\S.$
 Recall that for all $\S \neq \S_{11}$ such representations are one-dimensional. When $\S = \S_{11},$ $\text{dim}\; \pi_{(4)} = \text{dim}\; \pi_{(1)^4} = 1; \text{dim}\; \pi_{(31)} = \text{dim}\; \pi_{(211)} = 3$
and $\text{dim}\; \pi_{(22)} = 2.$
\vskip .1in

As a first step, we compute all local multiplicities.
For two local systems  $(\S, \mathcal L), (\S'', \mathcal L'')$  with $\S, \S'' \subset \fgv(2),$ we write $d(\S) = \text{dim}(\S)$
and
\begin{equation}\label{kl}
\mathcal P_{(\S'', \mathcal L''), (\S, \mathcal L)}(q) = \underset{i}{\sum} \; [\mathcal L'': \mathcal H^{-d(\S)+2i}(IC(\S, \mathcal L))\vert_{\S''}]  \; q^i,
\end{equation}
for the corresponding Kazhdan-Lusztig polynomial. Then, we have
\begin{equation}\label{kluse}
\chi^{\text{loc}}_{\S''}(IC(\S, \mathcal L)) = (-1)^{\text{dim}(\S)}
\; \underset{\mathcal L''}{\sum} \; \text{dim}(\mathcal L'')\; \mathcal P_{(\S'', \mathcal L''), (\S, \mathcal L)}(1).
\end{equation}

We use Dan Ciubotaru's   Kazhdan-Lusztig polynomials tables
in \cite[Section 5.1]{ci08} to complete the first stage of our computations.

\vskip .1in

Next,  we  recall that

\begin{equation}\label{ident}
\chi^{mic}_{\S'}(IC(\S, \mathcal L)) = \chi^{mic}_{\S'^{Pt}}(IC(\S_{Ft}, \mathcal L_{Ft})) = \chi^{mic}_{\widehat{\S'}}(IC(^t\S_{Ft}, ^t\mathcal L_{Ft})),
\end{equation}
where $\widehat{\S'} =  \;^t(\S')^{Pt}.$
For $\S \subset \fgv(2),$ $^t\S_{Ft}$ is given in 
 Table:2 while Table:1 gives   $\widehat{\S'}.$ 
 \vskip .1in
 
Combining equations  (\ref{eulo}),  (\ref{kluse}) and (\ref {ident})  we obtain a linear system of equations on the unknowns $c(\S', \S'')$ that we solve.
  The detailed computations are included in  Appendix A.
\end{proof}
 
 \vskip .2in
 
\begin{remark} In   Appendix A , Lemma \ref{l4},  we show that $c = c(\S_4, \S_{11}).$ 
 We are unable to determine the value of $c.$  
 It is noteworthy  that $T^*_{\S_4} V$ is the only conormal bundle in our list that does not contain
 a dense $\G^{\vee}(\it{h})$-orbit.
  \end{remark}
 \vskip .2in
 
 Theorem \ref{main} lists the  characteristic cycles of the Iwahori-Spherical representations attached to 
 the minimal distinguished nilpotent orbit of type $F_4.$ The proof of the Theorem relies on the Kazhdan-Lusztig tables
 in \cite{ci08}.
  The pair $(\S_{11},(1^4))$ is the geometric parameter of representation $X_5,$ which is not Iwahori-Spherical.
  The tables in \cite{ci08}, do not include the relevant Kazhdan-Lusztig needed to compute the $\CC(IC(\S_{11},(1^4)).$
  We use $D$-module techniques to settle the following proposition.

 \begin{prop} \label{missing}
  \[\CC (IC(\S_{11},(1^4)))  = \sum_{\substack{i=0 \\  i \neq 4}}^{11}\; [\ol{T_{\S_i}^* V}] \,\, +  c(\S_4, \S_{11}) \;  [\ol{T_{\S_4}^* V}] \] 
  
 \end{prop}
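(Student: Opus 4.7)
The plan is to bypass the missing Kazhdan--Lusztig data by exploiting a Fourier self-duality of $IC(\S_{11}, (1^4))$ together with $\mathcal{D}$-module analysis on the prehomogeneous vector space $V = \fgv(2)$. The local system $(1^4)$ is the sign character of the component group $S_4$, and I expect the corresponding IC sheaf to admit an explicit presentation via the semi-invariant $f$ and the Bernstein--Sato polynomial \eqref{eq:bfun}, whose roots split into blocks that are naturally indexed by the four irreducible characters of $S_4$.

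The first step is to establish that $IC(\S_{11}, (1^4))$ is Fourier self-dual, i.e., writing $Ft(IC(\S_{11}, (1^4))) = IC(\fts, \mathcal L_{Ft})$, one has $({}^t\fts,\, {}^t\mathcal L_{Ft}) = (\S_{11}, (1^4))$. A direct $\mathcal{D}$-module argument on the PHV $(\GL(2,\C)\times\GL(3,\C),\, \C^2 \otimes \Sym_2(\C^3))$ is preferable here, since $X_5$ is not Iwahori-spherical and Conjecture \ref{cj1} (together with the $AZ$-self-duality row of Table~2) cannot be quoted from \cite{em}. The idea is to identify $IC(\S_{11}, (1^4))$ as a specific simple composition factor of $\mathcal{O}_V \cdot f^s$ for an exponent $s$ whose residue modulo $\Z$ realizes the sign character; the symmetry of $b_f(s)$ about $s = -1$ visible in \eqref{eq:bfun} then singles out this factor as fixed under the PHV Fourier transform.

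Granting self-duality, equation \eqref{ft} yields $\chi^{\text{mic}}_{\S'}(IC(\S_{11}, (1^4))) = \chi^{\text{mic}}_{\widehat{\S'}}(IC(\S_{11}, (1^4)))$ for every orbit $\S'$. Table~1 organizes the twelve orbits into the pairs $\S_0 \leftrightarrow \S_{11}$, $\S_1 \leftrightarrow \S_{10}$, $\S_2 \leftrightarrow \S_9$, $\S_3 \leftrightarrow \S_8$, $\S_5 \leftrightarrow \S_6$, with $\S_4$ and $\S_7$ self-paired, so the characteristic cycle is governed by seven independent multiplicities. The top multiplicity $\chi^{\text{mic}}_{\S_{11}} = 1$ is automatic, forcing $\chi^{\text{mic}}_{\S_0} = 1$ as well. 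For the remaining orbits, I would read off the multiplicities from the composition series of $\mathcal{O}_V[f^{-1}]$ dictated by \eqref{eq:bfun}: each simple composition factor contributes a computable piece to the characteristic cycle via Kashiwara's index formula, and for the five conormal bundles $T^*_{\S_i} V$ ($i = 1,2,3,5,7$) the density of a $\G^{\vee}(h)$-orbit lets one identify each multiplicity with $1$, which then propagates to the paired orbits by the symmetry above.

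The main obstacle is twofold. First, establishing Fourier self-duality non-circularly demands a concrete $\mathcal{D}$-module identification of the sign IC sheaf — this is the technical heart of the argument and the reason the paper invokes $\mathcal{D}$-module techniques rather than Kazhdan--Lusztig combinatorics. Second, the contribution at $\S_4$: precisely because $T^*_{\S_4} V$ is the only conormal bundle in our list that does not contain a dense $\G^{\vee}(h)$-orbit (see the remark following Theorem \ref{main}), the local methods that pin down the other six multiplicities do not suffice at $\S_4$. One can only conclude that $\chi^{\text{mic}}_{\S_4}$ coincides with the universal constant $c = c(\S_4, \S_{11})$ from Theorem \ref{main}, which is exactly the content of the proposition.
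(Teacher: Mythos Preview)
Your outline gestures at the right ingredients --- the semi-invariant $f$, the $b$-function, the dense-orbit criterion --- but two essential steps are missing.

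First, the bridge from $IC(\S_{11},(1^4))$ to the localization is not made. You propose to read off multiplicities from the composition series of $\mathcal{O}_V[f^{-1}]$, but $IC(\S_{11},(1^4))$ is \emph{not} a constituent of that module: the open-orbit factor of $(\mathcal{O}_V)_f$ carries the trivial local system $(4)$, and the remaining factors are supported on smaller strata (see the weight filtration in the remark following the proposition). The paper instead passes to the twisted module $(\mathcal{O}_V)_f\cdot\sqrt{f}$, and the decisive observation --- which you do not make --- is that this module is already \emph{simple}, because $b_f$ in \eqref{eq:bfun} has no roots in $\tfrac12+\Z$ (invoking \cite[Corollary~3.13]{ly}). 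Thus the $\mathcal{D}$-module underlying $IC(\S_{11},(1^4))$ is $(\mathcal{O}_V)_f\cdot\sqrt{f}$ itself, and a general identity gives $\CC\big((\mathcal{O}_V)_f\cdot\sqrt{f}\big)=\CC\big((\mathcal{O}_V)_f\big)$. Only now does the dense-orbit criterion of \cite{micro}, \cite{lw} apply, and it yields multiplicity $1$ at \emph{every} $\S_i$ with $i\neq 4$ directly --- your Fourier self-duality detour is superfluous. Note too that ``dense orbit $\Rightarrow$ multiplicity $1$'' is a statement about localizations, not about arbitrary IC sheaves (e.g.\ $\chi^{\text{mic}}_{\S_{11}}(IC(\S_{11},(211)))=3$), so without the simplicity step your inference for $i=1,2,3,5,7$ is itself unjustified.

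Second, the equality $\chi^{\text{mic}}_{\S_4}=c(\S_4,\S_{11})$ requires proof; that other methods fail at $\S_4$ is not an argument. The analysis above yields $\CC\big((\mathcal{O}_V)_f\big)=\sum_{i\neq 4}[\ol{T_{\S_i}^* V}]+a\,[\ol{T_{\S_4}^* V}]$ for some $a\geq 1$, and showing $a=c$ is a separate, nontrivial step. The paper carries it out by expressing $[(\mathcal{O}_V)_f]$ in the Grothendieck group as a $\Z$-combination of simple IC classes, matching against Theorem~\ref{main} to see that $IC(\S_4,1)$ appears with coefficient $a-c$, and then showing this coefficient vanishes via the available Iwahori-spherical Kazhdan--Lusztig data together with the inverse geometric-multiplicity matrix of \cite{abv}. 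Since $\widehat{\S_4}=\S_4$, Fourier self-duality gives no constraint at $\S_4$, and your proposal supplies no alternative mechanism for this step.
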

 
\begin{proof}
 Set $V = \fgv(2)$ and let $\mathcal M=(\mathcal O_V)_f$ denote the localization of the structure sheaf $\mathcal O_V$ at the semi-invariant $f$. First, we show that  $\CC (IC(\S_{11},(1^4))) = \CC (\mathcal{M})$. 
 \vskip .1in

 The complement $U=V\setminus \{f=0\}$ is  $\S_{11}$, the dense  $\G^{\vee}(\it{h})   $-orbit  on  $V.$ Under the Riemann--Hilbert correspondence, the perverse sheaf $IC(\S_{11},(1^4))$ corresponds to the minimal extension to $V$ of the simple (regular, holonomic) $\mathcal D_U$-module $\mathcal{O}_U \cdot \sqrt{f}$.  This minimal extension is known to be the
 unique simple submodule of the $\mathcal{D}_V$-module  $(\mathcal O_V)_f \cdot \sqrt{f}$. 
 It is important to observe  from  (\ref{eq:bfun}) that  the $b$-function of $f$ does not have any half-integer roots.
 Then, the assumptions of  \cite[Corollary 3.13]{ly}  are satisfied and we can conclude that   $(\mathcal O_V)_f \cdot \sqrt{f}$ is simple as a $\mathcal D_V$-module. It follows that the Riemann-Hilbert correspondence attaches  
 $IC(\S_{11},(1^4))$ to $(\mathcal O_V)_f \cdot \sqrt{f}.$  In particular, we have
 \[\CC (IC(\S_{11},(1^4)) = \CC ((\mathcal O_V)_f \cdot \sqrt{f}).\]
 On the other hand, by  \cite[Theorem 3.2]{gz} (see also \cite[Lemma 1.11]{lo}) 
 \[ \CC ((\mathcal O_V)_f \cdot \sqrt{f}) = \CC (\mathcal M).\]

 \vskip .1in

  Next, we determine $\CC(\mathcal M)$. By  \cite{km}, each $\ol{T_{\S_i}^* V}$  with $1\leq i \leq 11$ is a component of the characteristic variety of $\S_{11}$, and $\ol{T_{\S_j}^* V}$ has a dense $\G^{\vee}(\it{h})$-orbit if and only if $j\neq 4$ (see also Theorem \ref{cun}).  This, together with \cite[Proposition 4.7]{micro} (or \cite[Lemma 3.12 and proof of Proposition 3.14]{lw}) implies that  if $j\neq 4$,  then $\ol{T_{\S_j}^* V}$ has multiplicity one in $\CC(\mathcal M)$.  Thus, we have
\begin{equation}\label{ccloc}
CC (\mathcal{M} )=  \sum_{\substack{i=0 \\  i \neq 4}}^{11} \; [\ol{T_{\S_i}^* V}] \,\, + a\;  [\ol{T_{\S_4}^* V}],
\end{equation}
for some integer $a\geq 1$. 
\vskip .1in

We are left to show that $a=c$. Note that $\mathcal M$  is the direct image (in the category of  equivariant $D$-modules) of the trivial local system on $\S_{11}.$  All its composition factors must correspond to one of the equivariant irreducible local systems from Table: 2. 
Let  $\bf {P}_{\mathcal M}$ be the perverse sheaf associated to $\mathcal M$ via the
Riemann--Hilbert correspondence. 

Using Theorem \ref{main} and (\ref{ccloc}), we see that 
 
\begin{align}\label{cs}
&[\mathbf{P}_{\mathcal M}] = [IC(\S_{11}, (4))] + [IC(\S_{10}, 1)]+ [IC(\S_{9}, 1^2)]+ [IC(\S_{6}, 1)]+[IC(\S_{5}, 1)]+\\ \nonumber
& + (a-c) [IC(\S_{4}, 1)]+ [IC(\S_{3}, 1)]+[IC(\S_{0}, 1)].
\end{align}
We observe that  $a=c$ if and only if the multiplicity of the irreducible perverse sheaf
 $[IC(\S_{4}, 1)]$ in the composition series of $[\bf{P}_{\mathcal M}]$ is zero. 
We use \cite[Corollary 1.25(a) and Corollary 15.13(a)]{abv} to show that this is the case.
\vskip .1in

 Identifying the Grothendieck group of constructible and perverse sheaves, and  writing $\gamma = (\S_{\gamma}, \mathcal L_{\gamma})$, we have
\begin{align*}
[\mu(\S_{11},(4)) ] &= \underset{\text{local systems }\gamma}{\sum} m_g(\gamma, (\S_{11},(4)))  [IC(\gamma)]\\ 
&= [IC(\S_{11}, (4))] + [IC(\S_{10}, 1)]+ [IC(\S_{9}, 1^2)]+ [IC(\S_{6}, 1)]+[IC(\S_{5}, 1)]+\\ \nonumber
&+ (a-c) [IC(\S_{4}, 1)]+ [IC(\S_{3}, 1)]+[IC(\S_{0}, 1)].
\end{align*}

In  the language of \cite{abv},   $(m_g(\gamma, \xi))$  is  the {\it geometric multiplicity matrix.} The inverse of this matrix
$(c_g(\eta, \delta))$ allows us to write perverse sheaves in terms of constructible sheaves. That is,

\[[\mathbf {P}(\gamma)] =   \underset{\text{local systems }\delta}{\sum} (-1)^{\text{dim}(\S_{\delta})}\;
 c_g(\delta, \gamma)  [\mu(\delta)]. \]
 
It follows, see \cite[Corollary 1.25(a) and Corollary 15.13(a)]{abv}, that
\begin{equation*}
 0 =  \underset{\text{local systems }\gamma}{\sum} c_g((\S_4, 1), \gamma)\;m_g(\gamma, (\S_{11},(4)));
\end{equation*}
where the various $c_g((\S_4, 1), \gamma)$ can be computed using
  the Kazhdan-Lusztig tables in \cite{ci08}.
  We have,
  
\begin{align*}
 0 &=  \underset{\text{local systems }\gamma}{\sum} c_g((\S_4, 1), \gamma)\;m_g(\gamma, (\S_{11},(4)) )\\
& = m_g(\S_4, 1), \S_{11},(4)) - 1 + 2 - 1 = (a - c). 
\end{align*}
Thus, $a = c.$

\end{proof}
\vskip .3in

\begin{remark}
Theorem \ref{main} can be settled using other arguments.
Indeed, our first partial computations  relied on the results and methods in \cite{km}, \cite{lw}, \cite{lo}, \cite{ly}, \cite{ll}.
Let us briefly mention some of these ideas. 
For any partition  $\lambda$, the characteristic cycle of the localization $\mathcal M$ along $f$ of the $\D$-module corresponding to $IC(\S_{11}, \lambda)$ can be determined readily (up to $c$). As a mixed Hodge module, 
$\mathcal M$ also carries a weight filtration. Based on the calculations of Bernstein--Sato polynomials in \cite{km} and \cite[Section 4.2]{lo}, and using \cite[Proposition 3.14]{ly}, the weight length of $\mathcal M$ is $4$, whenever $\lambda \neq (1^4)$.  By \cite{ll}, we have an isomorphism $^t Ft(\mathcal M) \cong \mathbb{D}(\mathcal M)$, of mixed Hodge modules (with an appropriate weight shift), where $\mathbb{D}$ stands for the holonomic dual. Using methods in  \cite{lw}, the objects $\mathcal M$ are injective-projective in the category of equivariant $\D$-modules. With some work, one can then obtain the composition series for each $\mathcal M$, and deduce information on  the characteristic cycles of the components. In particular, we record the following fact, of independent interest.
\vskip .1in
 
The non-zero associated graded terms of the weight filtration on $\M= (\mathcal{O}_V)_f$ are
 \[\gr_{12}^W \M = IC(\S_{11}, 1), \, \gr_{13}^W \M = IC(\S_{10}, 1), \, \gr_{14}^W \M = IC(\S_{9}, 1^2) \oplus IC(\S_{6}, 1) \oplus IC(\S_{5}, 1), \]
 \[\gr_{15}^W \M = IC(\S_{3}, 1), \,\, \gr_{16}^W \M = IC(\S_{0}, 1).  \]
 
\end{remark}

\vskip .4in
 
 \subsection{ Weak Arthur Packets $\Pi^{\text{weak}}_{\psi_{F_4(a_3)}}(F_4\text{-split}) $}
 
 The aim of this section is to verify Conjecture \ref{cj2} for
 the {\it weak Arthur packet} 
 \begin{equation*}
 \Pi^{\text{weak}}_{\psi_{F_4(a_3)}}(F_4\text{-split}) =
 \{ AZ\big(X(q^{\frac{1}{2}}, \S, \mathcal L) \big) \in 
\Pi_{q^{\frac{1}{2}{\it h}}}^{\text{Lus}}\big(\textmd{G}(\fk)\big) : \G^{\vee}\cdot \S \subset {\it sp}\big( F_4(a_3) \big)\}.
\end{equation*}

 In the notation of \cite{cmcg}, 
 \begin{align*}
 &\text{Special piece of }(F_4(a_3)) =\{F_4(a_3), C_3(a_1), B_2, A_1+\tilde{A_2}, \tilde{A_1}+A_2\};\\
 &\{\S \subset \fgv(2)  : \G^{\vee}\cdot \S \in {\it sp}(F_4(a_3))\} =
 \{\S_{11}, \S_{10}, \S_9, \S_8, \S_7\};\\
 &\{\widehat{S}: \S \in {\it sp}(F_4(a_3))\}  = \{ \S_0, \S_1, \S_2, \S_3, \S_7\}.
 \end{align*}
 
 It follows from Subsection \ref{danaz} that
  
  \begin{align*}
 \Pi^{\text{weak}}_{\psi_{F_4(a_3)}}(F_4\text{-split})& =\{ X_5 = AZ (X_5) , X_7 = AZ (X_9) , 
 X_8 = AZ(X_8) , X_9 = AZ(X_7), \\
 &X_{11} = AZ(X_{11}), 
 X_{13} = AZ(X_4), X_{15}  = AZ(X_6), X_{17} = AZ(X_3), \\
 &X_{18} = AZ(X_{10}) , X_{19} = AZ(X_2), X_{20} =AZ(X_1)\}.
 \end{align*}

 Conjecture \ref{cj2} states that there is a set of simplified Arthur parameters $\{\widetilde{\psi}_j\}$  and an appropriate notion of
 Arthur packet  so that  $ \Pi^{\text{weak}}_{\psi_{F_4(a_3)}}(F_4\text{-split}) = \underset{\widetilde{\psi}_j }{\bigcup} \;
\Pi^{\text{Art}}_{\widetilde{\psi}_j}.$

 \vskip .2in
 
 There are $10$ simplified Arthur parameters, $\widetilde{\psi}:\text{SL}(2)_{\text{Lang}}\times\text{SL}(2)_{\text{Art}}\mapsto \G^{\vee},$  with  infinitesimal parameter determined by  $q^{\frac{1}{2}{\it h}}.$ We label these parameters by pairs of orbits, as
 indicated in  subsection \ref{ap}. These are:
 \begin{align*}
 \widetilde{\psi}_0 \equiv (\S_0, \S_{11}), \widetilde{\psi}_1 \equiv (\S_1, \S_{10}), \widetilde{\psi}_2 \equiv (\S_2, \S_{9}),
 \widetilde{\psi}_3 \equiv (\S_3, \S_{8}), \widetilde{\psi}_7 \equiv (\S_7, \S_{7});\\
 \widetilde{\psi}^t_1 \equiv (\S_{11}, \S_{0}),  \widetilde{\psi}^t_2 \equiv (\S_{10}, \S_{1}),  \widetilde{\psi}^t_2 \equiv (\S_{9}, \S_{2}), \widetilde{\psi}^t_3 \equiv (\S_{8}, \S_{3}), \widetilde{\psi}^t_7 \equiv (\S_{7}, \S_{7}).
 \end{align*}
 \vskip .1in

  We use
  the  lists   of  geometric Langlands' parameters of the representations $\{X_i\}_{i=1}^{20}$ in Table:2 and
  the lists of  characteristic cycles of the corresponding perverse sheaves in Theorem \ref{main}
to  find:
 \begin{align*}
 \Pi^{mic}_{\S_0} & = \{X_5, X_{13}, X_{17},  X_{19}, X_{20}\},\\
 \Pi^{mic}_{\S_1} & = \{X_5, X_{9}, X_{13}, X_{15},  X_{17}, X_{19}\},\\
  \Pi^{mic}_{\S_2} & = \{X_5, X_{9}, X_{11}, X_{13},  X_{17}, X_{18} \},\\
  \Pi^{mic}_{\S_3} & = \{X_5, X_8, X_9, X_{13},  X_{15}\},\\
 \Pi^{mic}_{\S_7} & = \{X_5,  X_7, X_8, X_9, X_{11}\}.
 \end{align*}
 
 The {\it Basic Arthur Packet} at $ \Pi^{Art}_{\psi_{F_4(a_3)}}(\G(\fk)),$ computed in
 \cite[Section 4]{cmbo}, is 
 \[\Pi^{Art}_{\psi_{F_4(a_3)}}(\G(\fk))  = \{X_5, X_{13}, X_{17},  X_{19}, X_{20}\}.\]
 Thus,
 
 \[\Pi^{Art}_{\psi_{F_4(a_3)}}(\G(\fk)) = \Pi^{mic}_{\S_0} (\G(\fk)).\]

 Moreover, 
 \begin{align*}
 \Pi^{\text{weak}}_{\psi_{F_4(a_3)}}(\G(\fk)) &= \underset{ \{\widehat{S}: \;\; \G^{\vee}\cdot \S \in\;  {\it sp}(F_4(a_3))\}}
 {\bigcup} \Pi^{mic}_{\widehat{\S}} (\G(\fk)) \\
  &=  \Pi^{mic}_{\S_0}  \cup  \Pi^{mic}_{\S_1} \cup  
 \Pi^{mic}_{\S_2} \cup   \Pi^{mic}_{\S_3}  \cup \Pi^{mic}_{\S_7}.
 \end{align*} 
 
 \vskip .1in
 
  By \cite{ci22}, these micro-packets consist of unitary representations.

 \vskip .2in

By Table:2 and Theorem \ref{main}, we have:
 \begin{align*}
AZ\big( \Pi^{mic}_{\S_0}(\G(\fk))\big) & =\{X_1, X_2, X_3, X_4, X_5\} = \Pi^{mic}_{\S_{11}}(\G(\fk))\\
 AZ\big(\Pi^{mic}_{\S_1}(\G(\fk))\big) & =\{X_2, X_3, X_4, X_5, X_6, X_7\} = \Pi^{mic}_{\S_{10}}(\G(\fk))\\
AZ\big(\Pi^{mic}_{\S_2}(\G(\fk))\big) & =\{X_3, X_4, X_5, X_7, X_{10}, X_{11} \} = \Pi^{mic}_{\S_9}(\G(\fk))\\
AZ\big(\Pi^{mic}_{\S_3}(\G(\fk))\big) & =\{X_4, X_5, X_6, X_7, X_{8} \} = \Pi^{mic}_{\S_8}(\G(\fk))\\
 AZ\big(\Pi^{mic}_{\S_7}(\G(\fk))\big) & =\{X_5, X_7, X_8, X_9, X_{11} \} = \Pi^{mic}_{\S_7}(\G(\fk)).
 \end{align*}
 \vskip .1in
 
 We conclude  that $\Pi^{\text{weak}}_{\psi_{F_4(a_3)}}(F_4\text{-split})$ is a union of micro-packets
 that verify the expectations listed in subsection \ref{cmbo}. 
 
 \vskip .3in
 
 \section{Weak Arthur packets}\label{last}
 
 The goal of this section is to show, in general,  that if Conjecture \ref{cj1} holds then
 $\Pi^{\text{weak}}_{\psi_{\mathcal O}}(\G(\fk))$ is a union of micro-packets of simplified Arthur type.
 We keep the notation of Subsection \ref{od}. 
 
 Fix  a nilpotent orbit $\mathcal O \subset \fgv$ and choose a corresponding $\mathfrak{sl}_2$-triple
 $\{{\it e}, {\it h}, {\it f}\}$ with ${\it h} \in \ftv_r.$ Observe that the orbit
 $\G^{\vee}({\it h}) \cdot {\it e}$ is open and dense in $\fgv(2).$
 Write ${\it sp}(\mathcal O)$ 
 for the special piece of $\mathcal O,$ in the sense of \cite{sp}.

 Recall,
 \[\Pi^{\text{weak}}_{\psi_{\mathcal O}}(\G(\fk))  = \{ AZ\big(X(q^{\frac{1}{2}}, \S, \mathcal L)\big) \in 
\Pi_{q^{\frac{1}{2}{\it h}}}^{\text{Lus}}\big(\textmd{G}(\fk) \big) : \G^{\vee}\cdot \S \in {\it sp} (\mathcal O)\}.\]

\vskip .1in

\begin{theorem}
Assume Conjecture \ref{cj1} holds for $\G(\fk).$ Then,
\begin{equation}\label{general}
\Pi^{\text{weak}}_{\psi_{\mathcal O}}(\G(\fk))  = \underset{\substack\{{\widehat{\S}\subset \;\fgv(2):\\
 \;G^{\vee}\cdot \S \in  \;{ \it sp}(\mathcal O)}\}}{\bigcup} \Pi^{\text{mic}}_{\widehat{\S} } \big(\G(\fk)\big).
 \end{equation}
\end{theorem}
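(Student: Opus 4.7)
My plan is to reduce (\ref{general}) to a purely orbit-theoretic statement by canceling the $AZ$-involution from both sides. Write $\Sigma(X) \subset \fgv(2)$ for the orbit appearing in the geometric parameter of $X \in \Pi_{q^{\frac{1}{2}{\it h}}}^{\text{Lus}}(\G(\fk))$. Since the weak Arthur packet is defined by
\[\Pi^{\text{weak}}_{\psi_{\mathcal O}}(\G(\fk)) = AZ\bigl(\{Y : \G^{\vee}\cdot \Sigma(Y) \in {\it sp}(\mathcal O)\}\bigr),\]
(\ref{general}) reduces to establishing two claims: (a) $\Pi^{\text{mic}}_{\widehat{\S}} = AZ\bigl(\Pi^{\text{mic}}_{\S}\bigr)$ for every orbit $\S \subset \fgv(2)$, and (b) $\bigcup_{\S:\, \G^{\vee}\cdot \S \in {\it sp}(\mathcal O)} \Pi^{\text{mic}}_{\S} = \{Y : \G^{\vee}\cdot \Sigma(Y) \in {\it sp}(\mathcal O)\}$.

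Claim (a) is where Conjecture \ref{cj1} enters. For $X = X(q^{\frac{1}{2}{\it h}}, \S', \mathcal{L}')$, Conjecture \ref{cj1} identifies the geometric parameter of $AZ(X)$ as $(\tilde{\S}, \tilde{\mathcal L}) := ({}^t\S'_{Ft},\, {}^t\mathcal{L}'_{Ft})$. I would then compare the decomposition (\ref{cc1}) of $CC(IC(\tilde{\S}, \tilde{\mathcal L}))$ with the decomposition supplied by (\ref{ft}) applied to $(\S', \mathcal{L}')$; using that $\widehat{\cdot}$ is an involution, matching the coefficients of $\overline{T^*_{\S''}(\fgv(2))}$ yields
\[\chi^{\text{mic}}_{\S''}\bigl(IC(\tilde{\S}, \tilde{\mathcal L})\bigr) = \chi^{\text{mic}}_{\widehat{\S''}}\bigl(IC(\S', \mathcal{L}')\bigr)\]
for every orbit $\S'' \subset \fgv(2)$. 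This is precisely the biconditional $AZ(X) \in \Pi^{\text{mic}}_{\S_0} \Leftrightarrow X \in \Pi^{\text{mic}}_{\widehat{\S_0}}$, which (together with $AZ^2 = \id$) gives (a).

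For (b), the inclusion $\supseteq$ is immediate: if $\G^{\vee}\cdot \Sigma(Y) \in {\it sp}(\mathcal O)$, then the micro-local multiplicity of $IC(\Sigma(Y), \mathcal{L}_Y)$ at its own supporting orbit equals the rank of $\mathcal{L}_Y$, hence is nonzero, so $Y \in \Pi^{\text{mic}}_{\Sigma(Y)}$. The converse is the substance of the proof. Suppose $Y \in \Pi^{\text{mic}}_{\S}$ with $\G^{\vee}\cdot \S \in {\it sp}(\mathcal O)$. Then $\chi^{\text{mic}}_{\S}(IC(\Sigma(Y), \mathcal{L}_Y)) \neq 0$ forces $\S \subset \overline{\Sigma(Y)}$, while $\Sigma(Y) \subset \fgv(2) \subset \overline{\mathcal O}$ (the latter because $\mathcal O$ is the Richardson orbit of the parabolic determined by the grading) supplies the chain
\[\G^{\vee}\cdot \S \;\leq\; \G^{\vee}\cdot \Sigma(Y) \;\leq\; \mathcal O.\]
The hard part will be the concluding combinatorial step: that ${\it sp}(\mathcal O)$ is upward-closed inside $\overline{\mathcal O}$, i.e., if $\mathcal O_1 \in {\it sp}(\mathcal O)$ and $\mathcal O_1 \leq \mathcal O_2 \leq \mathcal O$ then $\mathcal O_2 \in {\it sp}(\mathcal O)$. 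This follows from the defining property of the special piece---that $\mathcal O$ is the smallest special orbit $\geq \mathcal O'$ for any $\mathcal O' \in {\it sp}(\mathcal O)$---since the smallest special overgroup of $\mathcal O_2$ is a special orbit $\geq \mathcal O_1$ and hence $\geq \mathcal O$, while also being $\leq \mathcal O$, so it equals $\mathcal O$. Applied to $\mathcal O_1 = \G^{\vee}\cdot \S$ and $\mathcal O_2 = \G^{\vee}\cdot \Sigma(Y)$ this completes (b); what remains is formal bookkeeping on the interplay of the involutions ${}^t$, ${}^{Pt}$, $\widehat{\cdot}$, $Ft$, $AZ$.
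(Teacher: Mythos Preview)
Your proof is correct and follows essentially the same route as the paper: both use Conjecture \ref{cj1} together with identity (\ref{ft}) to transfer the micro-multiplicity condition across $AZ$, then invoke the closure relation $\S \subset \overline{\Sigma(Y)}$ and the upward-closedness of ${\it sp}(\mathcal O)$ inside $\overline{\mathcal O}$. Your factorization into claims (a) and (b) is a clean reorganization---and isolating $\Pi^{\text{mic}}_{\widehat{\S}} = AZ(\Pi^{\text{mic}}_{\S})$ as a standalone fact is a nice touch---but the substance is identical to the paper's direct two-inclusion argument.
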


\begin{proof}

First, we assume that  $Y \in \Pi_{q^{\frac{1}{2}{\it h}}}^{\text{Lus}}\big(\textmd{G}(\fk)\big)$  belongs to a micro-packet
$ \Pi^{\text{mic}}_{\widehat{\S}} \big(\G(\fk)\big)$ where $\S = \G^{\vee}({\it h})\cdot x$ and 
 $\G^{\vee} \cdot x \in {\it sp}(\mathcal O).$ We argue that $Y \in \Pi^{\text{weak}}_{\psi_{\mathcal O}}(\G(\fk)).$

Since the AZ duality is an involution on   $\Pi_{q^{\frac{1}{2}{\it h}}}^{\text{Lus}}\big(\textmd{G}(\fk)\big)$, we can write 
\[Y = AZ\big( X( q^{\frac{1}{2}}, \S', \mathcal L')\big),\]   for some orbit 
$\S' =  \G^{\vee}({\it h})\cdot {\it x'}  \subset \fgv(2).$
 If Conjecture \ref{cj1} holds true, the geometric Langlands' parameter of $Y$ is the pair
$(^t\S'_{Ft}, ^t\mathcal L'_{Ft}).$ Thus, our assumptions on $Y$ guarantee that
\[\chi^{\text{mic}}_{\widehat{\S}} (IC(^t\S'_{Ft}, ^t\mathcal L'_{Ft})) \neq 0.\]
On the other hand,  identity (\ref{ft}) gives
\[ \chi^{\text{mic}}_{\S} (IC(\S', \mathcal L' ))  = \chi^{\text{mic}}_{\widehat{\S}} (IC(^t\S'_{Ft}, ^t\mathcal L'_{Ft}))   \neq 0.\]

Hence, 
\[\S =  \G^{\vee}({\it h})\cdot x\subset \overline{\S'}= \overline{\G^{\vee}({\it h})\cdot {\it x'}} \subset \fgv(2)
= \overline{ \G^{\vee}({\it h})\cdot {\it e}}.\]

 Since $\G^{\vee}\cdot {\it x} \in {\it sp} (\mathcal O)$ and $\G^{\vee}\cdot {\it x} \subset \overline{\G^{\vee}\cdot {\it x'}},$ 
 we have $\G^{\vee} \cdot {\it x'} \in {\it sp}(\mathcal O).$
 We conclude that $Y \in  \Pi^{\text{weak}}_{\psi_{\mathcal O}}(\G(\fk)).$
 \vskip .1in

If $Y \in \Pi^{\text{weak}}_{\psi_{\mathcal O}}(\G(\fk)),$ 
the explicit description of  $\Pi^{\text{weak}}_{\psi_{\mathcal O}}(\G(\fk))$ in terms of $AZ$ implies 
 that  $Y = AZ\big(X(q^{\frac{1}{2}}, \S, \mathcal L)\big)$ for some orbit $\S$ with $\G^{\vee}\cdot \S \in {\it sp}\big(\mathcal O\big).$ Moreover, if Conjecture \ref{cj1} is true, 
the geometric Langlands' parameter of $Y$ is the pair $(^t\S_{Ft}, ^t\mathcal L_{Ft}).$
By identity (\ref{ft}), 
\[\chi^{\text{mic}}_{\widehat{\S}} (IC(^t\S_{Ft}, ^t\mathcal L_{Ft})) =   \chi^{\text{mic}}_{\S} (IC(\S, \mathcal L)) \neq 0.\]
That is, $Y \in \Pi^{\text{mic}}_{\widehat{\S} } \big(\G(\fk)\big).$
\end{proof}

\vskip .1in

Recall that the set of simplified Arthur parameters  satisfying (\ref{cn}) is 
 parameterized  by  pairs of orbits
 $\big(\S_{\widetilde{\psi}}, \widehat{\S_{\widetilde{\psi}} }\big).$
Write $\mathcal T$ for the set  of simplified Arthur parameters  satisfying (\ref{cn}) such that  $\G^{\vee} \cdot \S_{\widetilde{\psi}}  \in {\it sp}(\mathcal O).$

\begin{corollary} If Conjecture \ref{cj1} holds, we have

\[\Pi^{\text{weak}}_{\psi_{\mathcal O}}(\G(\fk))  = \underset{\widetilde{\psi} \in \; \mathcal T}{\bigcup} 
\Pi^{\text{mic }}_{ \widehat{\S_{\widetilde{\psi}} }}\; \big(\G(\fk)\big).\]
\end{corollary}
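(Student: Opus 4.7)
The plan is to show that this corollary is essentially a reindexing of the preceding theorem, using the bijection between simplified Arthur parameters satisfying (\ref{cn}) and $\G^{\vee}({\it h})$-orbits on $\fgv(2)$. First, I would invoke Theorem \ref{cun}, which parameterizes simplified Arthur parameters $\widetilde{\psi}$ satisfying (\ref{cn}) by pairs of orbits $(\S_{\widetilde{\psi}}, \widehat{\S_{\widetilde{\psi}}})$ in $\fgv(2)$, via the assignment $\widetilde{\psi} \mapsto \S_{\widetilde{\psi}} = \G^{\vee}({\it h}) \cdot {\it x}_{\widetilde{\psi}}$. In particular, this assignment is a bijection between the set of simplified Arthur parameters satisfying (\ref{cn}) and the set of $\G^{\vee}({\it h})$-orbits on $\fgv(2)$.

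Next, I would observe that the defining condition for $\mathcal T$, namely $\G^{\vee} \cdot \S_{\widetilde{\psi}} \in {\it sp}(\mathcal O)$, depends only on the orbit $\S_{\widetilde{\psi}}$. Thus, under the bijection from Theorem \ref{cun}, $\mathcal T$ corresponds precisely to the set
\[
\{\S \subset \fgv(2) : \G^{\vee} \cdot \S \in {\it sp}(\mathcal O)\},
\]
and consequently the collection $\{\widehat{\S_{\widetilde{\psi}}} : \widetilde{\psi} \in \mathcal T\}$ coincides with the collection $\{\widehat{\S} : \G^{\vee} \cdot \S \in {\it sp}(\mathcal O)\}$ appearing as the indexing set in the preceding theorem.

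With this reindexing in hand, the corollary follows immediately from the theorem: assuming Conjecture \ref{cj1}, the theorem gives
\[
\Pi^{\text{weak}}_{\psi_{\mathcal O}}(\G(\fk)) = \bigcup_{\substack{\widehat{\S} \subset \fgv(2): \\ \G^{\vee} \cdot \S \in {\it sp}(\mathcal O)}} \Pi^{\text{mic}}_{\widehat{\S}}(\G(\fk)) = \bigcup_{\widetilde{\psi} \in \mathcal T} \Pi^{\text{mic}}_{\widehat{\S_{\widetilde{\psi}}}}(\G(\fk)).
\]
There is no real obstacle here, since the conceptual content is entirely in the preceding theorem and in Theorem \ref{cun}; the only thing to double-check is that the bijection between simplified Arthur parameters and orbits sends the condition $\G^{\vee} \cdot \S_{\widetilde{\psi}} \in {\it sp}(\mathcal O)$ to the analogous orbit-level condition, which is immediate from the definitions.
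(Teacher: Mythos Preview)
Your proposal is correct and matches the paper's treatment: the paper states the corollary without proof, immediately after recalling that simplified Arthur parameters satisfying (\ref{cn}) are parameterized by pairs $(\S_{\widetilde{\psi}}, \widehat{\S_{\widetilde{\psi}}})$, so the corollary is indeed just a reindexing of the preceding theorem. Your write-up makes explicit exactly the bijection the paper leaves implicit.
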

\vskip .2in

 \appendix
 \section{ Proof of Theorem \ref{main} }
 
 \begin{lemma}\label{l1}
 \begin{align*}
  \CC(IC(\S_0, 1))  & =  [\ol{T_{\S_0}^* V}].\\
   \CC (IC(\S_{11}, (4) )) & = [\ol{T_{\S_{11}}^* V}].\\
   \CC (IC(\S_2, 1^2)) & =  [\ol{T_{\S_2}^* V}].\\
  \CC (IC(\S_9, 1))& = [\ol{T_{\S_9}^* V}].\\
  \CC (IC(\S_5, 1)) & =  [\ol{T_{\S_5}^* V}].\\    
   \CC (IC(\S_6, 1)) & =  [\ol{T_{\S_6}^* V}].\\
  \CC (IC(\S_4, 1))& =  [\ol{T_{\S_4}^* V}].
  \end{align*} 
  \end{lemma}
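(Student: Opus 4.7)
The plan is to treat each characteristic cycle in turn, beginning with the two extremal cases that can be settled by direct inspection of the perverse sheaves involved. For $\S_0 = \{0\}$, the sheaf $IC(\S_0, 1)$ is simply the skyscraper at the origin, whose characteristic cycle is automatically the cotangent fibre at $0$, that is, $[\ol{T^*_{\S_0} V}]$. For the dense open orbit $\S_{11}$ with trivial character $(4)$ of $S_4$, the IC complex coincides with the shifted constant sheaf $\C_V[\dim V]$ on the smooth ambient space $V$, and its characteristic cycle is the zero section $[\ol{T^*_{\S_{11}} V}]$. Both of these require no computation beyond recognition.

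For the remaining five pairs, I would apply the machinery assembled in Subsection~\ref{mp}. The key observation to verify is that each of these $IC(\S, \mathcal L)$ is a \emph{clean extension}, i.e.\ the local multiplicities satisfy $\chi^{\text{loc}}_{\S''}(IC(\S, \mathcal L))=0$ whenever $\S''\subsetneq \S$, while $\chi^{\text{loc}}_{\S}(IC(\S,\mathcal L))=(-1)^{\dim \S}$. Concretely, equation~(\ref{kluse}) reduces this to vanishing of the off-diagonal Kazhdan-Lusztig polynomials $\mathcal P_{(\S'', \mathcal L''), (\S, \mathcal L)}(q)$ for $\S''\subsetneq \S$, which can be read off from Ciubotaru's tables in \cite[Section 5.1]{ci08}. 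Once this vanishing is confirmed, the inversion formula~(\ref{imp}), together with the triangularity of the matrix $(c(\S', \S''))$ (non-zero only when $\S''\subseteq \ol{\S'}$) and the normalization $c(\S, \S)\,(-1)^{\dim \S}=1$, yields $\chi^{\text{mic}}_{\S'}(IC(\S,\mathcal L)) = \delta_{\S', \S}$, giving exactly the claimed single-term characteristic cycle.

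The main obstacle, beyond bookkeeping, is to match Ciubotaru's labelling of cells and representations of the component groups with the orbit/local system labelling used here. Particular care is needed for the sign local system $(1^2)$ on $\S_2$ and the trivial local systems on $\S_5$ and $\S_9$, whose orbits carry non-trivial $\Z/2\Z$ component groups; a careless reading can confuse distinct simple IC sheaves sharing the same underlying orbit. Once this correspondence is nailed down, the off-diagonal Kazhdan-Lusztig polynomials in question are all $0$ from the tables, and Lemma~\ref{l1} follows. These clean cases then serve as the base of the more delicate computations for the remaining orbits in Theorem~\ref{main}, where non-trivial contributions from smaller strata do appear.
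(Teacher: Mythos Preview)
Your treatment of $IC(\S_0,1)$ and $IC(\S_{11},(4))$ is correct. For the remaining five sheaves, however, the argument breaks down in two places.

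First, the cleanness claim is false for the four sheaves carrying the \emph{trivial} local system. For any orbit $\S$ the lowest cohomology sheaf $H^{-\dim\S}(IC(\S,1))$ is the constant sheaf $\C_{\ol{\S}}$, so its restriction to every boundary stratum $\S''\subsetneq\S$ is nonzero; since the higher stalks only add further non-negative contributions in~(\ref{kluse}), one always has $\chi^{\text{loc}}_{\S''}(IC(\S,1))\neq 0$. Thus the off-diagonal Kazhdan--Lusztig polynomials for $IC(\S_4,1)$, $IC(\S_5,1)$, $IC(\S_6,1)$, $IC(\S_9,1)$ do \emph{not} vanish in Ciubotaru's tables. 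Second, your stated triangularity of $(c(\S',\S''))$ is reversed: one has $c(\S',\S'')\neq 0$ only when $\S'\subseteq\ol{\S''}$ (witness $c(\S_{10},\S_{11})=1$ in Corollary~\ref{c1}, where $\S_{11}\not\subseteq\ol{\S_{10}}$). With the correct direction, even a genuinely clean extension need not have irreducible characteristic cycle: already a nontrivial rank-one local system on $\C^*\subset\C$ is clean, yet has $\CC=[\ol{T^*_{\C^*}\C}]+[T^*_0\C]$. So cleanness, even if it held for $(\S_2,1^2)$, would not by itself yield the conclusion via~(\ref{imp}).

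The paper's argument is entirely different and does not invoke cleanness. It exploits that each of the five sheaves is paired with its Fourier transform via Table~2, and that identity~(\ref{ident}) transports a hypothetical multiplicity along $\S'$ to a multiplicity of the dual sheaf along $\widehat{\S'}$ (Table~1). One then checks from the Hasse diagram that $\widehat{\S'}$ lies outside the closure of the support of the Fourier-dual sheaf, forcing the multiplicity to vanish. For instance, $IC(\S_2,1^2)$ and $IC(\S_9,1)$ are Fourier-dual; a contribution of $IC(\S_2,1^2)$ along $\S_1$ would force a contribution of $IC(\S_9,1)$ along $\widehat{\S_1}=\S_{10}\not\subseteq\ol{\S_9}$, which is impossible.
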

 
 \begin{proof}
 The first identity is self-evident. 
 From   Table:2,  we observe that  $(\S_{11}, (4) ) = (^t(\S_0)_{FT}, ^t(\mathcal L_0)_{FT}).$
 Identity   (\ref{ident}) yields 
  \begin{align*}
   \CC (IC(\S_{11}, (4) )) &= [\ol{T_{\widehat{\S_0}}^* V}]\\
   &= [\ol{T_{\S_{11}}^* V}].
   \end{align*}
 
 Next, we consider $IC(\S_2, 1^2)$ and $IC(\S_9, 1) = IC(^t(\S_2)_{FT}, ^t(1^2)_{FT}).$
 Once again, we use  (\ref{ident}) and Table :1 to obtain:
 \begin{align*}
 \CC (IC(\S_2, 1^2)) &= [\ol{T_{\S_2}^* V}] + \chi^{mic}_{S_1}(IC(\S_2, 1^2))\;  [\ol{T_{\S_1}^* V}] + \chi^{mic}_{S_0}(IC(\S_2, 1^2))\;  [\ol{T_{\S_0}^* V}] ;\\
 \CC (IC(\S_9, 1)) &=  [\ol{T_{\widehat{\S_2}}^* V}] + \chi^{mic}_{S_1}(IC(\S_2, 1^2))\;  [\ol{T_{\widehat{\S_1}}^* V}] + \chi^{mic}_{S_0}(IC(\S_2, 1^2))\;  [\ol{T_{\widehat{\S_0}}^* V}]\,\\
  \CC (IC(\S_9, 1)) &=  [\ol{T_{\S_9}^* V}] + \chi^{mic}_{S_1}(IC(\S_2, 1^2))\;  [\ol{T_{\S_{10}}^* V}] + \chi^{mic}_{S_0}(IC(\S_2, 1^2))\;  [\ol{T_{\S_{11}}^* V}].
 \end{align*}
 
 Since neither $\S_{10}$ nor $\S_{11}$ are contained in the closure of $\S_9,$ we conclude that
 \[\chi^{mic}_{S_1}(IC(\S_2, 1^2)) =  \chi^{mic}_{S_0}(IC(\S_2, 1^2)) = 0.\] 
 Hence, $\CC (IC(\S_2, 1^2))$ and  $\CC (IC(\S_9, 1))$ are  as stated in the
 Lemma.  A similar argument computes the remaining $\CC.$ 
 \end{proof}
 \vskip .1in
 
 \begin{corollary}\label{c1}
 \[ c( \S_{10},\S_{11})  = 1.\]
 
 \end{corollary}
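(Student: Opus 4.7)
The plan is to apply equation (\ref{imp}) at the orbit $\S_{10}$ to two IC sheaves whose characteristic cycles are already known from Lemma \ref{l1}: namely $IC(\S_{10}, 1)$ and $IC(\S_{11}, (4))$. The key observation is that $c(\S_{10}, \S'')$ vanishes unless $\ol{\S_{10}} \subset \ol{\S''}$, because Kashiwara's matrix expressing $\chi^{\text{loc}}$ in terms of $\chi^{\text{mic}}$ is upper triangular with respect to the closure order (its $(\S,\S')$-entry, a local Euler obstruction, is nonzero only when $\S \subset \ol{\S'}$), and the inverse matrix $(c(\S,\S'))$ has the same zero pattern. From the Hasse diagram only $\S_{10}$ and $\S_{11}$ lie above $\S_{10}$, so (\ref{imp}) at $\S_{10}$ collapses to
\[
\chi^{\text{mic}}_{\S_{10}}(\mathbf{P}) \;=\; c(\S_{10}, \S_{10})\,\chi^{\text{loc}}_{\S_{10}}(\mathbf{P}) \;+\; c(\S_{10}, \S_{11})\,\chi^{\text{loc}}_{\S_{11}}(\mathbf{P}).
\]

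First I would take $\mathbf{P} = IC(\S_{10}, 1)$. Since $\S_{11} \not\subset \ol{\S_{10}}$, the term $\chi^{\text{loc}}_{\S_{11}}(\mathbf{P})$ vanishes; and since $IC(\S_{10},1)|_{\S_{10}} = \mathbb{C}_{\S_{10}}[\dim \S_{10}]$ with $\dim \S_{10} = 11$, we get $\chi^{\text{loc}}_{\S_{10}}(\mathbf{P}) = (-1)^{11} = -1$. The leading coefficient of the characteristic cycle of any IC sheaf is $1$, so $\chi^{\text{mic}}_{\S_{10}}(\mathbf{P}) = 1$. This forces $c(\S_{10}, \S_{10}) = -1$.

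Next I would take $\mathbf{P} = IC(\S_{11}, (4))$. Because $\S_{11}$ is the open dense orbit in the smooth space $V = \fgv(2)$ and $(4)$ is the trivial local system, $\mathbf{P} = \mathbb{C}_V[12]$ as a perverse sheaf, and hence $\chi^{\text{loc}}_{\S''}(\mathbf{P}) = 1$ for every orbit $\S''$. By Lemma \ref{l1} the characteristic cycle of $\mathbf{P}$ is $[\ol{T^*_{\S_{11}} V}]$, so $\chi^{\text{mic}}_{\S_{10}}(\mathbf{P}) = 0$. Substituting into the reduced formula,
\[
0 \;=\; c(\S_{10}, \S_{10}) + c(\S_{10}, \S_{11}) \;=\; -1 + c(\S_{10}, \S_{11}),
\]
whence $c(\S_{10}, \S_{11}) = 1$.

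There is no real obstacle: the argument is just a two-unknown linear system pulled out of (\ref{imp}) via the two characteristic cycles already computed in Lemma \ref{l1}. The only subtlety worth flagging is the triangularity claim on $(c(\S,\S'))$, but this is immediate from the corresponding well-known triangularity of Kashiwara's index matrix.
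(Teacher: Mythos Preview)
Your argument is correct and follows the same route as the paper: reduce (\ref{imp}) at $\S_{10}$ to the two terms indexed by $\S_{10}$ and $\S_{11}$ via triangularity, then use $\chi^{\text{mic}}_{\S_{10}}(IC(\S_{11},(4)))=0$ from Lemma~\ref{l1} to solve for $c(\S_{10},\S_{11})$.

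Two small remarks. First, Lemma~\ref{l1} does not actually list the characteristic cycle of $IC(\S_{10},1)$ (that appears only later, in Lemma~\ref{l3}); but this is harmless, since all you use about $IC(\S_{10},1)$ is the leading coefficient $\chi^{\text{mic}}_{\S_{10}}=1$, which holds for any rank-one IC sheaf and needs no lemma. Second, the paper simply takes $c(\S_{10},\S_{10})=(-1)^{\dim \S_{10}}=-1$ for granted and then computes $\chi^{\text{loc}}_{\S_{10}}(IC(\S_{11},(4)))=1$ by reading off Kazhdan--Lusztig polynomials from \cite{ci08}; your identification $IC(\S_{11},(4))=\C_V[12]$ on the smooth space $V$ is a cleaner way to get that same value without the tables.
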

 
 \begin{proof}
 By Lemma \ref{l1}, $\chi^{mic}_{\S_{10}}(IC(\S_{11}, (4) )) =0$ where $\text{dim } \S_{10}= 11.$
 Hence,
 \begin{align*}
 0 & = \chi^{mic}_{\S_{10}}(IC(\S_{11}, (4) )) =  c(\S_{10}, \S_{10})\; \chi^{loc}_{\S_{10}}(IC(\S_{11}, (4)))
 + \; c(\S_{10}, \S_{11}) \; \chi^{loc}_{\S_{11}}(IC(\S_{11}, (4)))\\
 & = - \chi^{loc}_{\S_{10}}(IC(\S_{11}, (4)))
 + \; c(\S_{10}, \S_{11}) \; \chi^{loc}_{\S_{11}}(IC(\S_{11}, (4) )).
 \end{align*}
 
Here
\[ \chi^{loc}_{\S_{10}}(IC(\S_{11}, (4))) = \mathcal P_{(\S_{10}, 1),(\S_{11},4)}(1) + 
 \mathcal P_{(\S_{10}, 1^2),(\S_{11},4)}(1)  = 0 + 1, \text{ by \cite{ci08}.} \]
 Moreover, since  $\text{dim } \pi_{(4)} = 1,$ \cite{ci08} yields $\chi^{loc}_{\S_{11}}(IC(\S_{11}, (4))) =1.$
 We conclude that \[ 0 = -1 + c(\S_{10}, \S_{11}).\]
 \end{proof}
 
 \vskip .1in
 
 \begin{lemma}\label{l2}
 \begin{align*}
   \CC(IC(\S_1,1))  & =  [\ol{T_{\S_1}^* V}] + 3\;  [\ol{T_{\S_0}^* V}]. \\ 
  \CC (IC(\S_{11}, (31))) & = 3\; [\ol{T_{\S_{11}}^* V}]+[\ol{T_{\S_{10}}^* V}].\\
  \CC (IC(\S_2, 1)) & =  [\ol{T_{\S_2}^* V}]+[\ol{T_{\S_1}^* V}]+2\; [\ol{T_{\S_0}^* V}].\\
  \CC (IC(\S_{11}, (22))) & = 2\; [\ol{T_{\S_{11}}^* V}]+[\ol{T_{\S_{10}}^* V}]+[\ol{T_{\S_{9}}^* V}].\\
  \end{align*}
  \end{lemma}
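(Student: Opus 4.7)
The plan is to follow the template of Lemma \ref{l1}: pair each small-orbit IC with an $\S_{11}$-IC via the Fourier transform (using Table~2), then transfer micro-local multiplicities through identity (\ref{ident}) together with the orbit-closure support constraint on characteristic cycles. The two relevant pairs here are $(\S_1,1)\leftrightarrow(\S_{11},(31))$ and $(\S_2,1)\leftrightarrow(\S_{11},(22))$, coming from rows $X_2,X_{19}$ and $X_3,X_{17}$ of Table~2.

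For the first pair, $\overline{\S_1}=\S_0\cup\S_1$ restricts $\CC(IC(\S_1,1))$ to at most two terms, while (\ref{ident}) forces $\chi^{\text{mic}}_{\S'}(IC(\S_{11},(31)))=0$ unless $\widehat{\S'}\subset\overline{\S_1}$, i.e.\ $\S'\in\{\S_{10},\S_{11}\}$. The top coefficient of each IC equals the dimension of its local system ($1$ and $3$ respectively), and (\ref{ident}) pairs the remaining coefficient of each side with the top coefficient of the other, yielding $\chi^{\text{mic}}_{\S_0}(IC(\S_1,1))=3$ and $\chi^{\text{mic}}_{\S_{10}}(IC(\S_{11},(31)))=1$. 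This settles the first two identities of the lemma.

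For the second pair, $\overline{\S_2}=\S_0\cup\S_1\cup\S_2$ and $\widehat{\S_2}=\S_9$, so the same argument immediately gives $\chi^{\text{mic}}_{\S_0}(IC(\S_2,1))=\dim\pi_{(22)}=2$ and $\chi^{\text{mic}}_{\S_9}(IC(\S_{11},(22)))=1$, and restricts $\CC(IC(\S_{11},(22)))$ to be supported on $\S_9,\S_{10},\S_{11}$. What remains is the single self-paired coefficient $\chi^{\text{mic}}_{\S_1}(IC(\S_2,1))=\chi^{\text{mic}}_{\S_{10}}(IC(\S_{11},(22)))$. To evaluate it I would apply (\ref{imp}) at $\S_{10}$ using the already-known values $c(\S_{10},\S_{10})=-1$ (the $c$-diagonal equals $(-1)^{\dim}$ by the Kashiwara index formula) and $c(\S_{10},\S_{11})=1$ from Corollary \ref{c1}. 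Together with $\chi^{\text{loc}}_{\S_{11}}(IC(\S_{11},(22)))=\dim\pi_{(22)}=2$, the computation reduces to $\chi^{\text{loc}}_{\S_{10}}(IC(\S_{11},(22)))$, which by (\ref{kluse}) is the sum at $q=1$ of the KL polynomials $\mathcal P_{(\S_{10},1),(\S_{11},(22))}$ and $\mathcal P_{(\S_{10},1^2),(\S_{11},(22))}$ read from \cite[Section 5.1]{ci08}; that sum equals $1$, yielding $\chi^{\text{mic}}_{\S_{10}}(IC(\S_{11},(22)))=-1+2=1$.

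The main obstacle is purely bookkeeping: essentially every coefficient is forced by duality (\ref{ident}) and closure support, and only the last self-paired coefficient requires one consultation of Ciubotaru's tables. I expect the remaining lemmas in Appendix~A to proceed in the same fashion, with each new pair introducing at most one self-paired multiplicity that must be broken by combining a KL-polynomial lookup with the steadily growing inventory of $c(\S,\S')$ values from earlier pairs; the indeterminate $c=c(\S_4,\S_{11})$ highlighted in the remark after Theorem \ref{main} is the one place where this procedure cannot give a concrete number.
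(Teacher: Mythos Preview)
Your proposal is correct and follows essentially the same approach as the paper's proof: both arguments pair $(\S_1,1)$ with $(\S_{11},(31))$ and $(\S_2,1)$ with $(\S_{11},(22))$ via Table~2, use identity (\ref{ident}) together with closure support to reduce each characteristic cycle to at most one undetermined coefficient, and then compute the remaining self-paired multiplicity $\chi^{\text{mic}}_{\S_{10}}(IC(\S_{11},(22)))$ by applying (\ref{imp}) with $c(\S_{10},\S_{10})=-1$, $c(\S_{10},\S_{11})=1$ from Corollary~\ref{c1}, and the KL-polynomial values from \cite{ci08}. Your use of ``top coefficient $=$ rank of the local system'' is a harmless shortcut for what the paper writes out as $\chi^{\text{mic}}_{\S_{11}}(IC(\S_{11},\lambda))=c(\S_{11},\S_{11})\,\chi^{\text{loc}}_{\S_{11}}(IC(\S_{11},\lambda))=\dim\pi_\lambda$.
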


  \begin{proof}
  We first consider $\CC(IC(\S_1,1)$ and $\CC(IC(^t(\S_1)_{FT},1) = \CC (IC(\S_{11}, (31)).$
  We have
  \begin{align*}
   \CC(IC(\S_1,1)) & =  [\ol{T_{\S_1}^* V}] + \chi^{mic}_{\S_0}(IC(\S_1,1)) \;  [\ol{T_{\S_0}^* V}] \\ 
  \CC (IC(\S_{11}, (31))) & =   [\ol{T_{\S_{10}}^* V}] + \chi^{mic}_{\S_0}(IC(\S_1,1)) \; [\ol{T_{\S_{11}}^* V}],\\
  \end{align*}
  where 
  \begin{align*}
  \chi^{mic}_{\S_0}(IC(\S_1,1)) & = \chi^{mic}_{\S_{11}}(IC(\S_{11}, (31)))  = c(\S_{11}, \S_{11})  \chi^{loc}_{\S_{11}}(IC(\S_{11}, (31)))\\
   &= \chi^{loc}_{\S_{11}}(IC(\S_{11}, (31))), 
  \text { as dim $\S_{11} = 12$}\\
   & = \text{dim } \pi_{(31)} = 3,
  \text{ by \cite{ci08}. }\\
  \end{align*}
 We have computed the first two characteristic cycles listed in Lemma \ref{l2}.
 \vskip .2in

 We use a similar argument to obtain
  \begin{align*}
   \CC (IC(\S_2, 1)) & =  [\ol{T_{\S_2}^* V}]+ \chi^{mic}_{\S_1}(IC(\S_2,1)) \;   [\ol{T_{\S_1}^* V}]+ 2\; [\ol{T_{\S_0}^* V}],\\
  \CC (IC(\S_{11}, (22))) & =   [\ol{T_{\S_{9}}^* V}] +   \chi^{mic}_{\S_1}(IC(\S_2,1)) \;   [\ol{T_{\S_{10}}^* V}]  + 2\; [\ol{T_{\S_{11}}^* V}], \text{ where}
  \end{align*}
  
   \begin{align*}
   \chi^{mic}_{\S_1}(IC(\S_2,1)) & =  \chi^{mic}_{\S_{10}}(IC(\S_{11}, (22))) \\
   & = c(\S_{10}, \S_{10})\;  \chi^{loc}_{\S_{10}}(IC(\S_{11}, (22))) + c(\S_{10}, \S_{11})\; \chi^{loc}_{\S_{11}}(IC(\S_{11}, (22)))\\
   & = - \; \chi^{loc}_{\S_{10}}(IC(\S_{11}, (22))) + \chi^{loc}_{\S_{11}}(IC(\S_{11}, (22))), \text{ by  Corollary \ref{c1}.}
  \end{align*}
   We use the tables of Kazhdan-Lusztig polynomials in \cite{ci08}  to compute
   $ \chi^{loc}_{\S_{10}}(IC(\S_{11}, (22))) = 1$ and  $\chi^{loc}_{\S_{11}}(IC(\S_{11}, (22))) = 2.$
   We get,
    $\chi^{mic}_{\S_1}(IC(\S_2,1))  = 1.$
   \end{proof}
   
  \vskip .2in
  
  \begin{corollary}\label{c2}
  \begin{align*}
  c(\S_8,\S_{11}) &= 1,  c(\S_9, \S_{11}) =1\\
  c(\S_8,\S_{10}) &= -2,  c(\S_9, \S_{10}) = -2.
  \end{align*}
  \end{corollary}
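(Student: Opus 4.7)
The plan is to repeat the strategy of Corollary \ref{c1}, now applied to the orbits $\S_8$ and $\S_9$ (both of dimension $10$). By Lemma \ref{l1} and Lemma \ref{l2}, the characteristic cycles of $IC(\S_{11}, (4))$ and $IC(\S_{11}, (31))$ are both supported on $\ol{T^*_{\S_{11}}V} \cup \ol{T^*_{\S_{10}}V}$; in particular
\[
\chi^{mic}_{\S_i}\bigl(IC(\S_{11}, (4))\bigr) \;=\; \chi^{mic}_{\S_i}\bigl(IC(\S_{11}, (31))\bigr) \;=\; 0 \qquad (i = 8, 9).
\]
I would then expand each of these four vanishings using formula (\ref{imp}). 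Since $c(\S_i, \S'')$ vanishes unless $\S_i \subset \overline{\S''}$, and since $\S_8$ and $\S_9$ are incomparable in the Hasse diagram, the sum collapses to $\S'' \in \{\S_i, \S_{10}, \S_{11}\}$ for each $i = 8, 9$. With the diagonal value $c(\S_i, \S_i) = (-1)^{\dim \S_i} = 1$ already known, this yields a $2 \times 2$ linear system in the unknowns $c(\S_i, \S_{10})$ and $c(\S_i, \S_{11})$ for each $i$.

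The entries of these systems are the local multiplicities $\chi^{loc}_{\S''}(IC(\S_{11}, \lambda))$ with $\S'' \in \{\S_8, \S_9, \S_{10}, \S_{11}\}$ and $\lambda \in \{(4), (31)\}$. The values $\chi^{loc}_{\S_{11}}(IC(\S_{11}, \lambda)) = \dim \pi_\lambda$ are immediate from (\ref{kluse}) since $\dim \S_{11} = 12$ is even, and $\chi^{loc}_{\S_{10}}(IC(\S_{11}, (4))) = 1$ is already extracted in Corollary \ref{c1}. An analogous inversion of (\ref{imp}) against the known microlocal multiplicity $\chi^{mic}_{\S_{10}}(IC(\S_{11},(31))) = 1$ supplied by Lemma \ref{l2} yields $\chi^{loc}_{\S_{10}}(IC(\S_{11}, (31))) = 2$. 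The remaining four numbers $\chi^{loc}_{\S_i}(IC(\S_{11}, \lambda))$ for $i = 8, 9$ and $\lambda \in \{(4), (31)\}$ are to be read off directly from the Kazhdan-Lusztig tables in \cite[Section 5.1]{ci08} via identity (\ref{kluse}).

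With these local multiplicities tabulated, solving each $2 \times 2$ system produces the four asserted values $c(\S_8, \S_{11}) = c(\S_9, \S_{11}) = 1$ and $c(\S_8, \S_{10}) = c(\S_9, \S_{10}) = -2$. The main obstacle is therefore purely bookkeeping: correctly identifying the four specializations $\mathcal{P}_{(\S_i, 1),(\S_{11}, \lambda)}(1)$ in the tables of \cite{ci08}, including paying attention to the trivial-versus-sign local system decomposition on $\S_{10}$ that already appeared in Corollary \ref{c1}. Once these values are pinned down, the linear algebra is immediate and structurally identical to the argument of Corollary \ref{c1}.
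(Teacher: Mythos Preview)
Your proposal is correct and follows essentially the same route as the paper's own proof: vanishing of $\chi^{\mathrm{mic}}_{\S_i}$ on $IC(\S_{11},(4))$ and $IC(\S_{11},(31))$ for $i=8,9$ is expanded via (\ref{imp}) to yield, for each $i$, a $2\times 2$ linear system in $c(\S_i,\S_{10})$ and $c(\S_i,\S_{11})$ whose coefficients come from the Kazhdan--Lusztig tables in \cite{ci08}. Your slightly more explicit accounting of the closure constraint and the recovery of $\chi^{loc}_{\S_{10}}(IC(\S_{11},(31)))$ from Lemma~\ref{l2} rather than the tables are minor cosmetic differences.
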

  
  \begin{proof}
  Lemma \ref{l1} and Lemma \ref{l2} yield two  systems of linear equations:
  \begin{align*}
  \chi^{mic}_{\S_{8}}(IC(\S_{11}, (4))) &= \chi^{mic}_{\S_{8}}(IC(\S_{11}, (31)))  =0, \\
 \chi^{mic}_{\S_{9}}(IC(\S_{11}, (4)))& =  \chi^{mic}_{\S_{9}}(IC(\S_{11}, (31))) =0.
 \end{align*}
 
 The values for $c(\S_8,\S_{10}),  c(\S_8, \S_{11})$ are the solutions of the first system of equations.
  The values for $c(\S_9,\S_{10}),  c(\S_9, \S_{11})$ are the solutions of the second system of equations.
  
  Using, once again the tables in \cite{ci08}, the first linear system of equations reads
  \begin{align*}
  0 & = 1 + 2 \;c(\S_8, \S_{10}) + 3\; c(\S_8,\S_{11})\\
  0 & = 1 + c(\S_8, \S_{10}) +  c(\S_8,\S_{11}).
  \end{align*}
  \end{proof}

   \vskip .2in
   
   \begin{lemma}\label{l3}
   \begin{align*}
  \CC (IC(\S_5, 1^2))& =  [\ol{T_{\S_5}^* V}]+ [\ol{T_{\S_3}^* V}]+[\ol{T_{\S_2}^* V}]+2[\ol{T_{\S_1}^* V}]+3[\ol{T_{\S_0}^* V}], \\
 \CC (IC(\S_{11}(211)) & = 3[\ol{T_{\S_{11}}^* V}]+2[\ol{T_{\S_{10}}^* V}]+[\ol{T_{\S_{9}}^* V}]+[\ol{T_{\S_{8}}^* X}]+[\ol{T_{\S_{6}}^* V}]\\
  \CC (IC(\S_3, 1)) & =  [\ol{T_{\S_3}^* V}]+[\ol{T_{\S_1}^* V}].\\
\CC (IC(\S_{10}, 1)) & =  [\ol{T_{\S_{10}}^* V}]+ [\ol{T_{\S_8}^* V}].
\end{align*}
\end{lemma}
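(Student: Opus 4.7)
The plan is to compute the four characteristic cycles in two Fourier-transform dual pairs, namely $\{IC(\S_5, 1^2), IC(\S_{11}, (211))\}$ and $\{IC(\S_3, 1), IC(\S_{10}, 1)\}$. By Table 2 together with Remark \ref{relevant}, both pairs consist of Iwahori-spherical representations for which Conjecture \ref{cj1} is known, so the Fourier identity \eqref{ident} applies. The strategy combines this identity with the expansion \eqref{imp}, the local Euler characteristic formula \eqref{kluse}, and the Kazhdan--Lusztig polynomial data in \cite[Section 5.1]{ci08}, together with the $c$-matrix entries already determined in Corollaries \ref{c1} and \ref{c2}.

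First, I would cut down the support of each characteristic cycle using the Hasse diagram: $\CC(IC(\S_5, 1^2))$ lives on $\overline{\S_5} = \{\S_0, \S_1, \S_2, \S_3, \S_5\}$ and $\CC(IC(\S_3, 1))$ on $\overline{\S_3} = \{\S_0, \S_1, \S_2, \S_3\}$, while identity \eqref{ident} together with Table 1 transports these vanishings to the Fourier side, leaving only $\{\S_6, \S_8, \S_9, \S_{10}, \S_{11}\}$ as possible support for $\CC(IC(\S_{11}, (211)))$ and only $\{\S_8, \S_{10}\}$ for $\CC(IC(\S_{10}, 1))$. This already isolates exactly the terms appearing in the statement. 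The multiplicities at the open orbits are then immediate: $\chi^{mic}_{\S_{11}}(IC(\S_{11}, (211))) = \dim \pi_{(211)} = 3$, $\chi^{mic}_{\S_5}(IC(\S_5, 1^2)) = 1$, and $1$ for each of $\S_{10}$ and $\S_3$. The multiplicities of $\CC(IC(\S_{11}, (211)))$ at $\S_{10}, \S_9, \S_8$ are obtained from \eqref{imp} using the diagonal relation $c(\S, \S) = (-1)^{\dim \S}$ together with $c(\S_{10}, \S_{11}) = 1$, $c(\S_8, \S_{11}) = c(\S_9, \S_{11}) = 1$, $c(\S_8, \S_{10}) = c(\S_9, \S_{10}) = -2$, after reading the relevant $\chi^{loc}$ values from the KL tables; identity \eqref{ident} then transfers these values to $\CC(IC(\S_5, 1^2))$ at $\S_1, \S_2, \S_3$. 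A fully parallel but simpler computation settles the pair $\{IC(\S_3, 1), IC(\S_{10}, 1)\}$.

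The main obstacle will be the multiplicity $\chi^{mic}_{\S_6}(IC(\S_{11}, (211)))$ (equivalently, via \eqref{ident}, the top multiplicity $\chi^{mic}_{\S_5}(IC(\S_5, 1^2)) = 1$), because it requires the three entries $c(\S_6, \S_8)$, $c(\S_6, \S_{10})$, $c(\S_6, \S_{11})$, none of which has been determined in the preceding lemmas. To pin them down I would combine the three vanishing conditions $\chi^{mic}_{\S_6}(IC(\S_{11}, \lambda)) = 0$ for $\lambda \in \{(4), (31), (22)\}$, all of which follow from the characteristic cycles already computed in Lemmas \ref{l1} and \ref{l2}, with the expansion \eqref{imp}. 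This yields a $3 \times 3$ linear system in the three unknown $c$-entries, whose coefficients come from KL-table input; assuming nondegeneracy (as is expected, since the three local systems on $\S_{11}$ produce independent $\chi^{loc}$ vectors over $\overline{\S_{11}}$), one solves for the entries and then obtains $\chi^{mic}_{\S_6}(IC(\S_{11}, (211))) = 1$ by one final application of \eqref{imp}, completing the computation of all four characteristic cycles.
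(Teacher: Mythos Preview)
Your approach matches the paper's: compute on the Fourier side via identity (\ref{ident}), the $c$-entries from Corollaries \ref{c1} and \ref{c2}, and the KL tables of \cite{ci08}, then transfer; the paper carries out the pair $(IC(\S_3,1),IC(\S_{10},1))$ in detail and declares the pair $(IC(\S_5,1^2),IC(\S_{11},(211)))$ analogous.

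Your final paragraph, however, manufactures a nonexistent obstacle. As your own parenthetical remark already says, (\ref{ident}) identifies $\chi^{\text{mic}}_{\S_6}(IC(\S_{11},(211)))$ with the leading coefficient $\chi^{\text{mic}}_{\S_5}(IC(\S_5,1^2))=1$, so the value is immediate and no $3\times 3$ system in the unknowns $c(\S_6,\S_8),\,c(\S_6,\S_{10}),\,c(\S_6,\S_{11})$ is required. The paper uses exactly this shortcut---computing the trivial top multiplicity on the low-dimensional side and transporting it to the Fourier partner---and never determines the entries $c(\S_6,\S')$ in the course of this lemma. The point of (\ref{ident}) is precisely that one may compute each micro-multiplicity on whichever side of the Fourier pair is easier; you should simply take the shortcut you already spotted.
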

  
 \begin{proof}
  
  We compute $\CC(IC(\S_3, 1))$ and $\CC(IC(^t(\S_3)_{FT}, 1)) = \CC(IC(\S_{10}, 1)).$
  The characteristic cycles,  $\CC (IC(\S_5, 1^2))$ and $\CC (IC(\S_{11}(211))$ are computed in a similar manner.
  \vskip .2in
  
 It is clear  that
  \begin{align*}
  \chi^{mic}_{\S_0}(IC(\S_3,1)) & = \chi^{mic}_{\S_{11}}(IC(\S_{10},1)) = 0.\\
  \chi^{mic}_{\S_1}(IC(\S_3,1)) & = \chi^{mic}_{\S_{10}}(IC(\S_{10},1)) = c(\S_{10}, \S_{10})\; \chi^{loc}_{\S_{10}}(IC(\S_{10},1)) = 1.
  \end{align*}
  
  We must compute
  \begin{align*}
   \chi^{mic}_{\S_2}(IC(\S_3,1))  &=  \chi^{mic}_{\S_{9}}(IC(\S_{10},1))\\
   &=  c(\S_9, \S_9) \; \chi^{loc}_{\S_{9}}(IC(\S_{10},1)) + c(\S_9,  \S_{10}) \; \chi^{loc}_{\S_{10}}(IC(\S_{10},1)) \\
  & =   \chi^{loc}_{\S_{9}}(IC(\S_{10},1)) -  c(\S_9,  \S_{10})\\
  & =   \chi^{loc}_{\S_{9}}(IC(\S_{10},1)) + 2, \text{ by Corollary \ref{c2}}. 
  \end{align*}
  
   This time, the third table on \cite[page 3]{ci08} lists the values at $q = 1$ of the
   Kazhdan-Lusztig polynomials $\mathcal P_{(\S_9, 1), (\S_{10},1)} (1) =  \mathcal P_{(\S_9, (1)^2), (\S_{10},1)}(1) = 1.$
 We deduce that $\chi^{loc}_{\S_{9}}(IC(\S_{10},1)) = (-1)^{\text{dim}\S_{10}}\;  2 = - 2.$ 
 That is,
 $ \chi^{mic}_{\S_2}(IC(\S_3,1)) =  \chi^{loc}_{\S_{9}}(IC(\S_{10},1)) + 2 = -2 + 2 = 0.$
 \end{proof}
 \vskip .2in
 
 \begin{corollary}\label{c3}
 \begin{alignat*}{6}
   c(\S_5, \S_{11}) &= 1, &&& c(\S_7, \S_{11}) &= 1,&&& c(\S_4, \S_{10}) &= - 3 \; c(\S_4, \S_{11}) \\
   c(\S_5, \S_{10}) &= - 3, &&& c(\S_7, \S_{10}) &= -3, &&&{}                     \\
   c(\S_5, \S_{9}) &=  0, &&& c(\S_7, \S_{9}) &= 1, &&& c(\S_4, \S_{9}) &= 1+ \; c(\S_4, \S_{11}) \\
   c(\S_5, \S_8) & = 3, &&& c(\S_7,\S_8) & = 2,&&& c(\S_4, \S_{8}) &= 2 \; c(\S_4, \S_{11})\\
    c(\S_5, \S_{7}) &=  -2, &&& c(\S_7, \S_{7}) &= -1&&& c(\S_4, \S_{7}) &=  - \; c(\S_4, \S_{11}). 
\end{alignat*}
\end{corollary}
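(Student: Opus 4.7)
The plan is to extend the linear-algebra strategy of Corollaries \ref{c1} and \ref{c2}. For each of the three orbits $\S_i \in \{\S_4, \S_5, \S_7\}$, the Kashiwara-type identity (\ref{imp}) reads
\[
\chi^{\text{mic}}_{\S_i}\bigl(IC(\S',\mathcal{L}')\bigr)
\;=\; \sum_{\S'' : \, \S_i \subseteq \overline{\S''}} c(\S_i,\S'')\,\chi^{\text{loc}}_{\S''}\bigl(IC(\S',\mathcal{L}')\bigr),
\]
where the sum is restricted by the support of conormal bundles. I would normalize the system by the diagonal values $c(\S_i,\S_i)=(-1)^{\dim \S_i}$ (so $c(\S_5,\S_5)=1$, $c(\S_7,\S_7)=-1$, $c(\S_4,\S_4)=-1$) and feed in the entries already supplied by Corollaries \ref{c1}--\ref{c2} for $\S_8,\S_9,\S_{10}$.

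The left-hand sides are read from Lemmas \ref{l1}--\ref{l3}: for each choice of $(\S',\mathcal{L}')$ the stated characteristic cycle yields $\chi^{\text{mic}}_{\S_i}(IC(\S',\mathcal{L}'))$ directly, equal to $0$ whenever $\overline{T^*_{\S_i}V}$ is not a component. The right-hand sides, i.e.\ the local multiplicities $\chi^{\text{loc}}_{\S''}(IC(\S',\mathcal{L}'))$ for $\S_i \subseteq \overline{\S''}$, come from Ciubotaru's Kazhdan--Lusztig tables \cite[Section 5.1]{ci08} via formula (\ref{kluse}). For the $\S_5$-row, the five vanishing conditions $\chi^{\text{mic}}_{\S_5}=0$ on $IC(\S_{11},(4))$, $IC(\S_{11},(31))$, $IC(\S_{11},(22))$, $IC(\S_{11},(211))$ and $IC(\S_{10},1)$ furnish a triangular $5\times 5$ linear system in the unknowns $c(\S_5,\S_7),\ldots,c(\S_5,\S_{11})$, solved by back-substitution in decreasing order of $\dim \S''$. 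The $\S_7$-row is handled by the same template, using vanishing of $\chi^{\text{mic}}_{\S_7}$ on $IC(\S_{11},(4))$, $IC(\S_{11},(31))$, $IC(\S_{11},(22))$ and $IC(\S_{10},1)$.

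The main obstacle is the $\S_4$-row. As noted in the remark following Theorem \ref{main}, the conormal $T^*_{\S_4}V$ is the unique one on our list not containing a dense $\G^{\vee}(h)$-orbit, and the data in Lemmas \ref{l1}--\ref{l3} is insufficient to pin down $c := c(\S_4,\S_{11})$. The workaround is to carry $c$ as a formal parameter through the same linear system: the four vanishing conditions $\chi^{\text{mic}}_{\S_4}(IC)=0$ coming from $IC(\S_{11},(4)),\, IC(\S_{11},(31)),\, IC(\S_{11},(22)),\, IC(\S_{10},1)$ form a rank-four system in five unknowns, which after back-substitution expresses each of $c(\S_4,\S_{10}),\, c(\S_4,\S_9),\, c(\S_4,\S_8),\, c(\S_4,\S_7)$ as an affine-linear function of $c$, matching the claimed formulas. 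A built-in cross-check is that the additional vanishing $\chi^{\text{mic}}_{\S_4}(IC(\S_{11},(211)))=0$ from Lemma \ref{l3} should be linearly dependent on the other four when the affine expressions are substituted, confirming internal consistency and explaining why $c$ remains a free parameter of the family.
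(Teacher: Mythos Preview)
Your approach is essentially the paper's: set up linear systems from (\ref{imp}), take the left-hand vanishing from the characteristic cycles already computed in Lemmas \ref{l1}--\ref{l3}, take the local multiplicities from \cite{ci08}, and solve for the $c(\S_i,\S'')$.  The only real differences are in which test sheaves you feed in.  For the $\S_7$-column the paper uses the four conditions $\chi^{\text{mic}}_{\S_7}(IC(\S_{11},\lambda))=0$ for $\lambda\in\{(4),(31),(22),(211)\}$ rather than swapping one out for $IC(\S_{10},1)$; for the $\S_4$-column the paper keeps all four $\S_{11}$-equations and adds $\chi^{\text{mic}}_{\S_4}(IC(\S_9,1))=\chi^{\text{mic}}_{\S_4}(IC(\S_{10},1))=0$, an overdetermined system with the same one-parameter family of solutions.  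The one noteworthy wrinkle is the $\S_5$-column: instead of $IC(\S_{10},1)$ the paper uses the two $\S_9$-equations, and the vanishing $\chi^{\text{mic}}_{\S_5}(IC(\S_9,1^2))=0$ is \emph{not} available from Lemmas \ref{l1}--\ref{l3} (that cycle appears only later, in Lemma \ref{l4}).  The paper justifies it a priori by Fourier transform: since $(\S_9,1^2)$ is self-dual under $^tFt$ and $\widehat{\S_5}=\S_6$, the conormals of $\S_5$ and $\S_6$ must occur with equal multiplicity in $\CC(IC(\S_9,1^2))$, and $\S_6\not\subset\overline{\S_9}$ forces both to zero.  Your choice of $IC(\S_{10},1)$ sidesteps this argument, at the modest cost of verifying that your five equations really do have full rank (the systems are not literally triangular, despite the heuristic).
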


\begin{proof}
The first column is the solution to the system of equations
\begin{align*}
\chi^{mic}_{\S_5}(IC(\S_{11},(4)) &= \chi^{mic}_{\S_5}(IC(\S_{11},(31)) =\chi^{mic}_{\S_5}(IC(\S_{11},(22))= \chi^{mic}_{\S_5}(IC(\S_{11},(211)) =0\\
\chi^{mic}_{\S_5}(IC(\S_{9},(1)) &=  \chi^{mic}_{\S_5}(IC(\S_{9},(1^2)) = 0.
\end{align*}
Note that all  the listed equations but  $\chi^{mic}_{\S_5}(IC(\S_{9},(1^2)) = 0$  follow from the computations
included in Lemma \ref{l1}, Lemma \ref{l2} and Lemma \ref{l3}.
To justify the extra equation recall that $IC(\S_{9},(1^2)) = (IC(^t(\S_{9})_{FT},^t(1^2)_{FT})$ and $\widehat{\S_5} = \S_6.$
This implies that  either both $\overline{T^*_{\S_5}V}$ and $\overline{T^*_{\S_6}V}$  contribute to
to $\CC(IC(\S_{9},(1^2)),$  or neither of the two conormal bundles occur in $\CC(IC(\S_{9},(1^2)).$
Since $\S_6$ is not contained in the closure if $\S_9,$ we conclude that $ \chi^{mic}_{\S_5}(IC(\S_{9},(1^2)) = 0.$

The second column is the solution to the system of equations
\begin{align*}
\chi^{mic}_{\S_7}(IC(\S_{11},(4)) &= \chi^{mic}_{\S_7}(IC(\S_{11},(31)) =\chi^{mic}_{\S_7}(IC(\S_{11},(22))= \chi^{mic}_{\S_7}(IC(\S_{11},(211)) =0.
\end{align*}

The third column is the solution to the system of equations
\begin{align*}
\chi^{mic}_{\S_4}(IC(\S_{11},(4)) &= \chi^{mic}_{\S_4}(IC(\S_{11},(31)) =\chi^{mic}_{\S_4}(IC(\S_{11},(22))= \chi^{mic}_{\S_4}(IC(\S_{11},(211)) =0\\
\chi^{mic}_{\S_4}(IC(\S_{9},(1)) &=  \chi^{mic}_{\S_4}(IC(\S_{10},(1)) = 0.
\end{align*}
\end{proof}

\vskip .2in
\begin{lemma}\label{l4}
There is constant $c \geq 2$ such that
\begin{align*}
 \CC (IC(\S_7, 1))& =  [\ol{T_{\S_{7}}^* V}]+(c+1)[\ol{T_{\S_{4}}^* V}]+[\ol{T_{\S_{3}}^* V}]+2[\ol{T_{\S_{2}}^* V}]+[\ol{T_{\S_{1}}^* V}],\\
 \CC (IC(\S_8, 1))& = [\ol{T_{\S_8}^* V}]+ [\ol{T_{\S_7}^* V}]+2[\ol{T_{\S_6}^* V}]+2[\ol{T_{\S_5}^* V}]+(c-2)[\ol{T_{\S_4}^* V}]+[\ol{T_{\S_3}^* V}],\\
 \CC (IC(\S_9, 1^2)) & =  [\ol{T_{\S_9}^* V}]+[\ol{T_{\S_7}^*V}]+c[\ol{T_{\S_4}^* V}]+[\ol{T_{\S_2}^* V}],\\
 \CC (IC(\S_{10}, 1^2)) & = [\ol{T_{\S_{10}}^* V}]+2[\ol{T_{\S_{9}}^* V}]+[\ol{T_{\S_{8}}^* V}]+[\ol{T_{\S_{7}}^* V}]+(c+1)[\ol{T_{\S_{4}}^* V}].
\end{align*}
\end{lemma}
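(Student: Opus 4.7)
The plan is to mirror the strategy used in Lemmas~\ref{l1}--\ref{l3}: for each of the four perverse sheaves $\mathbf{P}$, expand every micro-multiplicity $\chi^{\text{mic}}_{\S'}(\mathbf{P})$ via Kashiwara's formula (\ref{eulo}) into $\sum_{\S''}c(\S',\S'')\chi^{\text{loc}}_{\S''}(\mathbf{P})$, read the local multiplicities $\chi^{\text{loc}}$ off the Kazhdan--Lusztig tables in \cite{ci08} via (\ref{kluse}), and substitute the values of $c(\S',\S'')$ collected in Corollaries \ref{c1}, \ref{c2}, \ref{c3} (together with the standard identity $c(\S,\S)=(-1)^{\dim\S}$). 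The four sheaves in the statement pair naturally under the Fourier-transform identity (\ref{ident}): from Table~2 and Remark~\ref{relevant}, $IC(\S_7,1)$ and $IC(\S_{10},1^2)$ are Fourier duals of one another, whereas $IC(\S_8,1)$ and $IC(\S_9,1^2)$ are self-dual. Combined with Table~1 this both halves the work and provides a built-in consistency check on the coefficient of each $[\overline{T^*_{\widehat{\S}}V}]$.

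Before running this machine, I would first produce the two constants beyond those recorded in Corollary~\ref{c3} that will be needed, namely $c(\S_4,\S_5)$ and $c(\S_4,\S_6)$. The Hasse diagram shows $\S_4\not\subset\overline{\S_5}$ and $\S_4\not\subset\overline{\S_6}$, so $\chi^{\text{loc}}_{\S_4}(IC(\S_i,1))=0$ for $i=5,6$; combined with Lemma~\ref{l1}, the equation $0=\chi^{\text{mic}}_{\S_4}(IC(\S_i,1))=c(\S_4,\S_i)\chi^{\text{loc}}_{\S_i}(IC(\S_i,1))$ yields $c(\S_4,\S_5)=c(\S_4,\S_6)=0$. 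With these in hand, the expansion of $\chi^{\text{mic}}_{\S_4}(\mathbf{P})$ for each of the four target sheaves collapses to a sum over $\S''\in\{\S_4,\S_7,\S_8,\S_9,\S_{10},\S_{11}\}$ in which every $c(\S_4,\S_j)$ with $j\ge 7$ is, by Corollary~\ref{c3}, a known $\Z$-linear expression in the single parameter $c:=c(\S_4,\S_{11})$. Feeding in the KL values produces the prefactors $c+1$, $c-2$, $c$, $c+1$ on $[\overline{T^*_{\S_4}V}]$ in the four formulas. For every other conormal bundle, the constants $c(\S',\S'')$ are already known numerically, so the remaining micro-multiplicities are computed term by term; Fourier duality then cross-checks, e.g., that $\chi^{\text{mic}}_{\S_4}(IC(\S_7,1))=\chi^{\text{mic}}_{\S_4}(IC(\S_{10},1^2))$ (since $\widehat{\S_4}=\S_4$) and $\chi^{\text{mic}}_{\S_3}(IC(\S_7,1))=\chi^{\text{mic}}_{\S_8}(IC(\S_{10},1^2))$.

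The main obstacle, inherited from the remark following Theorem~\ref{main}, is that none of the techniques available to us determines $c$ itself: $T^*_{\S_4}V$ is the only conormal bundle in our list without a dense $\G^\vee({\it h})$-orbit, so the good-Lagrangian criterion used in Lemma~\ref{l1} to force multiplicity one on the Fourier-self-dual components fails precisely here. What one can still extract, and what is claimed in the statement, is the bound $c\ge 2$: the micro-multiplicity $\chi^{\text{mic}}_{\S_4}(IC(\S_8,1))=c-2$ must be a non-negative integer, and non-negativity of $\chi^{\text{mic}}$ forces $c\ge 2$. This finishes the proof modulo the routine computation of the remaining coefficients on $[\overline{T^*_{\S_i}V}]$ for $i\ne 4$, each of which is a single Kashiwara sum whose inputs are now all known.
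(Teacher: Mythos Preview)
Your proposal is correct and follows essentially the same route as the paper: expand each $\chi^{\text{mic}}$ via (\ref{eulo}), read local multiplicities from the Kazhdan--Lusztig tables in \cite{ci08}, plug in the constants $c(\S',\S'')$ from Corollaries~\ref{c1}--\ref{c3}, and exploit the Fourier pairings from Table~2 (the paper only spells out the $IC(\S_8,1)$ case as a sample). Your derivation of $c\ge 2$ from the non-negativity of $\chi^{\text{mic}}_{\S_4}(IC(\S_8,1))=c-2$ is exactly the intended one; note that your detour to establish $c(\S_4,\S_5)=c(\S_4,\S_6)=0$ is unnecessary, since $\S_4\not\subset\overline{\S_5},\overline{\S_6}$ already forces this by the triangularity of the local Euler obstruction matrix (this is how the paper silently drops those terms).
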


\begin{proof}
This lemma is proved following the same strategy used to settle other lemmas in this appendix.
For the sake of completeness we  include the computation of  
 $\CC (IC(\S_8, 1))$ 
Since $IC(\S_8, 1) = IC(^t(\S_8)_{FT}, ^t(1)_{FT}),$ we have
\begin{align*}
 \CC (IC(\S_8, 1))  &= [\ol{T_{\S_8}^* V}]+\chi^{mic}_{\S_7}((IC(\S_8, 1))  [\ol{T_{\S_7}^* V}]+ 
 \chi^{mic}_{\S_5}((IC(\S_8, 1)) [\ol{T_{\S_6}^* V}]+\\ 
 &+\chi^{mic}_{\S_5}((IC(\S_8, 1)) [\ol{T_{\S_5}^* V}]+
 \chi^{mic}_{\S_4}((IC(\S_8, 1)) [\ol{T_{\S_4}^* V}]+    [\ol{T_{\S_3}^* V}].
 \end{align*}

Once again, we use  the tables of Kazhdan-Lusztig polynomials in \cite{ci08} to compute local multiplicities.
We obtain:

\begin{align*}
 \chi^{mic}_{\S_7}((IC(\S_8, 1)) & = c(\S_7, \S_7) + c(\S_7, \S_8) = -1 + c(\S_7, \S_8)\\
  \chi^{mic}_{\S_5}((IC(\S_8, 1)) & =  c(\S_5, \S_5) +  c(\S_5, \S_7) + c(\S_5, \S_8) = 1 +
   c(\S_5, \S_7) + c(\S_5, \S_8)\\
   \chi^{mic}_{\S_4}((IC(\S_8, 1))& = 2 c(\S_4, \S_4) + c(\S_4, \S_7) + c(\S_4, \S_8)
   = -2 + c(\S_4, \S_7) + c(\S_4, \S_8).
   \end{align*}
   
 We complete the computation using the values of the  various $c(\S, \S')$   listed in Corollary \ref{c3}. 
 In particular, $ \chi^{mic}_{\S_4}((IC(\S_8, 1))  = c(\S_4,\S_{11}) - 2 = c -2.$

\end{proof}

\end{document}